\newtheorem{theorem}{Theorem}[section]
\newtheorem{proposition}[theorem]{Proposition}
\newtheorem{assumption}[theorem]{Assumption}
\newtheorem{lemma}[theorem]{Lemma}
\newtheorem{corollary}[theorem]{Corollary}
\newtheorem{remark}[theorem]{Remark}
\newtheorem{conjecture*}{Conjecture}
\newtheorem*{properties}{Manifold Properties}
\newcommand{\spam}{\mathop{\mathrm{span}}}
\newcommand{\ints}{\mathbb{Z}}
\newcommand{\nats}{\mathbb{N}}
\newcommand{\reals}{\mathbb{R}}
\newcommand{\comps}{\mathbb{C}}
\newcommand{\M}{\mathbb{M}}
\newcommand{\sphere}{\mathbb{S}^d}
\newcommand{\sph}{\mathbb{S}} 
\newcommand{\Exp}{\operatorname{Exp}}
\newcommand{\man}{\M}
\newcommand{\dif}{\mathrm{d}}
\newcommand{\rad}{ \frac{h}{h_0 \Gamma_1}}
\newcommand{\bft}{\mathbf{t}}
\newcommand{\inj}{\mathrm{r}_\M}
\renewcommand{\d}{\mathrm{dist}}
\newcommand{\sfp}{\mathsf{p}}
\newcommand{\sfG}{\mathbf{G}}
\newcommand{\band}{\mathbf{B}}
\newcommand{\res}{\mathbf{R}}
\newcommand{\inrad}{\mathrm{r}_{\M}}  
\newcommand{\injconst}{\inrad}
\newcommand{\kernelnetwork}{S(\kappa,\Xi)}
\newcommand{\Ind}{\Xi}
\def\cN{{\mathcal N}}
\def\cL{{\mathcal L}}
\def\bt{{\bf t}}
\def\bft{{\bf t}}
\numberwithin{equation}{section}
\title{Kernel Approximation on Manifolds II: The $L_{\infty}$-norm of the $L_2$-projector
\thanks{\emph{2000 Mathematics
   Subject Classification:} 41A05, 41A63, 46E22, 46E35}
\thanks{\emph{Key words:}
   manifold, positive definite kernels, least squares approximation, Sobolev spaces}}
\author{T. Hangelbroek\thanks{ Department of Mathematics, Texas A\&M
    University, College Station, TX 77843, USA. Research supported
    by NSF Postdoctoral Research Fellowship.}, 
F. J. Narcowich\thanks{ Department of Mathematics, Texas A\&M
    University, College Station, TX 77843, USA. Research
    supported by grant DMS-0807033 from the National
    Science Foundation.}, 
X. Sun\thanks{ Department of Mathematics, Missouri State University Springfield, MO 65804, USA.},
J. D. Ward\thanks{ Department of Mathematics, Texas A\&M University,
    College Station, TX 77843, USA. Research supported by
    grant DMS-0807033 from the National Science
    Foundation.}  }
\begin{document}
\maketitle
\begin{abstract}
This article addresses two topics of significant mathematical and practical
interest in the theory of kernel approximation: the existence of 
local and stable bases and the $L_p$--boundedness of the least squares operator.
The latter is an analogue of the classical problem
in univariate spline theory, known there as the ``de Boor conjecture''.
A corollary of this work is that for appropriate kernels the least squares
projector provides universal near-best approximations for functions $f\in L_p$,
$1\le p\le \infty$.
\end{abstract}

\section{Introduction} 

The purpose of this article is two-fold: to provide operator norm estimates for least squares approximation using linear combinations of translates of kernels, and
to demonstrate that there are local, stable bases for the basic spaces of kernel approximation. 
We consider the problem of finding the best $L_2$ approximant from the finite dimensional space 
$$S(\kappa,\Xi) := \spam_{\xi\in\Xi}\kappa(\cdot,\xi),$$ 
where $\Xi \subset \M$, $\M$ is a $d-$dimensional, compact, complete, smooth Riemannian manifold, and $\kappa\colon \ \M\to \reals$ is a positive definite kernel (we make more assumptions about it, anon). In pursuing this result, we identify a basis $(v_{\xi})_{\xi\in\Xi}$ for  $S(\kappa,\Xi)$ having uniformly bounded condition numbers.

A large body of the literature has been devoted to the \index{}interpolation problem: given $f$,  find $I_{\Xi}f \in S(\kappa,\Xi)$
by enforcing the condition
\begin{equation} \label{nativespace-proj}
 (I_{\Xi} f)|_\Xi  = f|_\Xi.
\end{equation}
The cornerstone of the theory is that 
the linear operator $I_{\Xi}$ 
is the orthogonal projection on a certain reproducing kernel Hilbert space 
$\cN_\kappa$ (called the {\em native space}) to its subspace 
$\kernelnetwork$.
In other words, $I_{\Xi}f$ is the best approximant to $f$ from $S(\kappa,\Xi)$ in $\cN_\kappa$.

Recently, the authors of \cite{HNW} have proved that, under basic geometric
conditions on the sets $\Xi$ 
(a condition defined in the following section known as {\em quasiuniformity}, 
which we assume throughout this article) 
the $L_\infty$ norm of the interpolation operator $I_{\Xi}f$, known as the Lebesgue constant $\cL$, is bounded independent of $\Xi$. 
I.e., the interpolation operators $I_{\Xi}$, 
originally bounded by $1$ in the $\cN_\kappa\to \cN_\kappa$ norm,
are projectors from $C(\M)$ to $S(\kappa,\Xi)$ with 
$L_{\infty}\to L_{\infty}$ norms bounded by a constant independent of the dimension of $S(\kappa,\Xi)$. 
An immediate corollary is that for $f \in C(\M)$,
interpolation is {\em near best} approximation,  as is shown in the following standard inequality:
\[
 \|f - I_{\Xi}f \|_\infty \le (1 + \cL) \d_{\infty}(f, S(\kappa,\Xi)).
\]
To prove this result, a local basis for $\kernelnetwork$ was utilized. In the first part of this article, we investigate the $L_p$ {\em stability} of this basis: i.e., we
give conditions on the kernel sufficient to guarantee norm equivalence of the $L_p$ norm of a function in $S(\kappa,\Xi)$ and the sequence norm of its coefficients (relative to the local basis).

Our second goal is to study the least squares projector $T_\Xi$ from from 
$L_p(\M)$, $1\le p\le\infty$  to  $S(\kappa,\Xi)$. For each $f \in L_p(\M)$, 
we define $T_\Xi f$ to be the unique element in $S(\kappa,\Xi)$ such that
\[
 \int_\M \bigl(f(x)-\bigl(T_\Xi f\bigr) (x)\bigr)  g(x) dx=0, \quad g \in S(\kappa,\Xi).
\]
When $p=2$, $T_{\Xi}f$ is the best $L_2$ approximation to $f\in L_2$ from 
$S(\kappa,\Xi)$. 
Because it is a projector,  
\[
 \|f - T_\Xi(f) \|_p \le (1 + \|T_\Xi \|_p) \d_p(f, S(\kappa,\Xi)).
\]
We show that having a local and stable bases is sufficient to bound uniformly the 
norm of the least square projector (independent of $\#\Xi$, similar to the case of the 
Lebesgue constant). In particular, this guarantees that least squares projection is
near best approximation in $L_p$, $1\le p\le \infty$.

The development here is reminiscent of an episode in the history of the univariate 
B-spline theory, dubbed the ``de Boor conjecture'' \cite{deboorquasi}, which
stated that the $L_\infty-$ norm of the least square projector onto the space of splines 
of order $k$ can be bounded independently of the knot sequence.
Douglas, Dupont, and Wahlbin \cite{DDW} established the $L_\infty$ 
boundedness of the least square projector onto the space of splines of order 
$k \;(k \ge 1)$ with knot sequence $\bt:=(t_j)$ 
in terms of the mesh ratio:
\[
 M_t:= \max_{i} (t_{i+1}-t_i)/\min_{i} (t_{i+1}-t_i).
\]
A more stable upper bound for the projector in terms of the smaller global mesh ratio
\[
 M^{(k)}_t:= \max_{i} (t_{i+k}-t_i)/\min_{i} (t_{i+k}-t_i).
\]
was given by de Boor \cite{deboor1}.
Much later, Shadrin \cite{shadrin} gave an affirmative answer to the conjecture.

Although the manifold setting is rather abstract, conducting our 
investigation in this generality is sufficient to cover many practical applications, including 
approximation on spheres ($\mathbb{S}^d$), projective spaces ($\mathbb{P}^d$), and 
many matrix groups. On the other hand, little is required in the way of geometric 
sophistication; most statements hold for compact metric spaces, and the focus on manifolds 
comes mainly because the examples of local kernel bases from \cite{HNW} have been 
developed in this setting.

We organize this article as follows. In Section~\ref{prep}, we make necessary 
preparation for proving the main results. 
In Section~\ref{Cond}, we establish upper and lower condition numbers for the 
Lagrange basis  under some simple assumptions. 
Section~\ref{LeastSquares} investigates the problem of continuous least squares minimization.
We use the condition numbers obtain in Section~\ref{Cond} to estimate the spectrum of 
the Gram matrix for this problem and bound the $\ell_{\infty}\to \ell_{\infty}$ norm of its inverse:
we show that this matrix is boundedly invertible when the basis is local and stable.
In Section~\ref{Main}, we  state the main results for $L_p$ stability of the $L_2$ projector,
the $L_2$ projector is $L_p$ stable when there is a local, stable basis.
We then observe that this holds for the spaces generated by the kernels introduced in \cite{HNW}.  
An example is given for the 2-sphere where precise approximation orders are known. 
A few conjectures are discussed in Section~\ref{Conclusion}, while the Appendix
houses some technical proofs about the kernels introduced in \cite{HNW}.

\section{Background and Notation} \label{prep}

\subsection{Positive definite kernels}

The motivation for this article was to consider least squares approximation from 
$S(\kappa,\Xi)$, with $\kappa$  a positive definite kernel and $\Xi$ a finite subset of $\M$. 
In this case, a basis can be obtained directly from the kernels,
$v_{\xi} = \kappa(\cdot,\xi)$, $\xi\in \Xi$
and, until recently, it may  have seemed natural to consider this basis for most computational 
problems (the reproducing property of $\kappa$:
 \[
 \langle f (\cdot), \kappa (\cdot, x) \rangle = f (x), \quad f \in \cN_\kappa,\quad  x \in \M,
\]
perhaps motivated this natural instinct).
However, such bases are neither local nor stable, which results in analytical and numerical 
difficulties. In particular, the global nature of the kernel leads to a full Gram matrix, 
$\sfG = \bigl(\langle \kappa(\cdot,\xi), \kappa(\cdot,\zeta)\rangle\bigr)_{\xi,\zeta\in\Xi}$, 
and little can be said about the norm of its inverse. 

Even in the case of the compactly supported Wendland functions, as the number of centers 
increases the bands of associated collocation or Gram matrices become ever larger. 
Instead, we wish to use a basis that is local, and that scales according to the density of the centers. 

Recently, in \cite{HNW}, this has been demonstrated for Lagrange functions associated with interpolation using certain kernels. For a general compact, Riemannian $d-$dimensional 
manifold $\M$, a family of kernels $\kappa_{m,\M},$ for integers $m>d/2$ were considered. 
These kernels have as their native space $\cN_{\kappa_{m,\M}}$ the Sobolev space 
$W_2^m(\M)$.

\subsection{Manifolds}

Throughout this paper, $\M$ denotes a compact, $d$-dimensional Riemannian manifold.
We choose to carry out our investigation in the manifold setting in order to make the results 
applicable to a broad spectrum of practical problems, 
including approximation on spheres ($\mathbb{S}^d$). 
However, familiarity with Riemannian manifolds
is not necessary for a thorough understanding of the arguments we present here.
For the definitions, theorems and proofs in this article, the salient
facts are that $\M$ is a compact metric space, and a few more conditions we
now make clear.
 
We denote the ball in $\M$ centered at $x$ having radius $r$ by $B(x,r).$ 
Given a finite set $\Xi\subset\M$, we define its  \emph{mesh norm}  (or \emph{fill distance}) $h$ 
and the \emph{separation radius} $q$ to be:
\begin{equation} \label{minimal-separation}
 h:=\sup_{x\in \M} \d(x,\Xi)\qquad \text{and}\qquad  q:=\frac12 
\inf_{\xi,\zeta\in \Xi, \xi\ne \zeta} 
\d(\xi,\zeta).
\end{equation}
The mesh norm measures the density of $\Xi$ in $\M$, the separation radius determines 
the spacing of $\Xi$. The \emph{mesh ratio} $\rho:=h/q$ measures the uniformity of the 
distribution of $\Xi$ in $\M$. 
We say that the point set $\Xi$ is quasi-uniformly distributed, or simply that $\Xi$ is quasi-uniform if 
$\Xi$ belongs to a class of finite subsets with mesh ratio bounded by a constant $\rho_0$.\footnote{Naturally,
every finite point set has a bounded mesh ratio. Our results hold for point sets from a (not explicitly
defined). 
Inclusion in this family means that the mesh ratio
is bounded by an unspecified constant whose size influences various constants in our results.}

The manifold is endowed with a measure, and we indicate the measure of
subsets $\Omega\subset \M$ by $\mathrm{vol}(\Omega)$. The integral, and the $L_p$ 
spaces for $1\le p\le \infty$, are defined with respect to this measure.  The embeddings
$$C(\M) \subset L_p(\M)\ \text{for}\ 1\le p \le \infty 
\quad \text{and} \quad 
L_p(\M) \subset L_q(\M)\ \text{for}\ 1\le q\le p\le \infty$$ 
hold. In addition, $L_2$ is a Hilbert space equipped with the inner product 
$\langle\, \cdot\, ,\, \cdot \,\rangle\colon \ (f,g)\mapsto \langle f,g\rangle$

The final properties of the manifold, in force throughout the article, are the following.
\begin{properties}\label{Man} 
There exist constants 
$0<\alpha_{\M},K_{\M}, \omega_{\M}<\infty$ and  $0<\inrad< \infty$ for which the 
following hold:
\begin{enumerate}
\item 
for all $x\in \M$  and all $r\le \inrad$ , 
$$\alpha_{\M} r^d \le \mathrm{vol}(B(x,r))\le \omega_{\M} r^d;$$
\item 
for $\Xi\subset \M$, $x\in\M$ and $R\ge q$,
 $$\#\bigl(\Xi\cap B(x,R)\bigr) \le K_{\M} (R/q)^d.$$
\end{enumerate}
\end{properties}

We note that the preceding holds when 
$\inrad$ is the injectivity radius, which 
bounds the radii of balls in which local geometric properties of the manifold are similar to those of $\reals^d$.\footnote{This is the radius of the largest ball in $\reals^d$ for which the exponential map (centered at any point of the manifold) is a diffeomorphism between the ball and the corresponding neighborhood in the manifold.} Its technical definition is unimportant for our purposes here, although it appears in the statement of a key condition (Assumption \ref{LagrangeDecay}), as well as in the definition of the previous constants.

%
\section{Condition of the Lagrange Basis }\label{Cond}

In this section we show that a Lagrange basis is stable if it is local and satisfies an equicontinuity condition.

Let $(v_{\xi})_{\xi\in\Xi}$ be a linearly independent family of continuous functions  on $\M$, 
indexed by a finite set $\Xi\subset \M$.  In this section we present conditions sufficient to 
establish an equivalence between the $L_p$ norm of 
$s \in X:=\spam_{\xi\in\Xi} v_{\xi}$ 
and the $\ell_p$ norm of its normalized coefficients.  
Thus, we seek constants $0 < c_1 < c_2$, called {\em condition numbers}, so that 
for $s = \sum_{\xi\in \Xi} A_{\xi} v_{\xi}$, the two sided estimate 
\begin{equation} \label{comparison}
c_1q^{d/p}\bigl( \sum_{\xi\in\Xi}|A_{\xi}|^p\bigr)^{1/p}
\le 
\|\sum_{\xi \in \Xi} A_{\xi} v_{\xi}  \|_{L_p(\M)}
\le 
c_2q^{d/p}\bigl( \sum_{\xi\in\Xi}|A_{\xi}|^p\bigr)^{1/p}
\end{equation}
holds true, with the usual modification, 
$\max_{\xi\in\Xi} |A_{\xi}|\sim \|\sum_{\xi \in \Xi} A_{\xi} v_{\xi} \|_{\infty}$,
when $p=\infty$. 
In particular the basis to be considered will be the Lagrange basis (see Assumption~\ref{LagrangeBasis}). 
One observes that, as expected, the condition numbers depend in a controlled way on $p$, as
is the case with B-spline bases and wavelet bases, and this dependence on $p$ (encoded in the factors $q^{d/p}$)
can be suppressed by taking an appropriate renormalization of the basis
(see Remark~\ref{renorm} below).

Our ultimate goal is to prove bounds (\ref{comparison}) for certain kernel spaces 
$X=S(\kappa,\Xi)$ with condition numbers that are independent of $\# \Xi$. 
However, the development of this section does not make use of the fact that $X$ 
is a kernel space (and there may be instances where one would not wish to assume 
this). Instead, we prefer to state our results for a general space, $X$, in terms of a few 
basic assumptions on the the basis functions $v_{\xi}$, and then to show that such 
assumptions hold for the kernel spaces we consider. 
\begin{remark}\label{Nikolskii}
An immediate consequence of (\ref{comparison}) is the Nikolskii inequality, 
which states that for $1\le p\le q\le \infty$, \begin{equation}\label{eqNikolskii}
\|s\|_q \le \frac{c_2}{c_1} q^{-d(\frac1{p} - \frac1{q})} \|s\|_p\quad \text{for all}\ s\in X.
\end{equation} 
In case $X=S(\kappa,\Xi)$, the penalty $q^{-d(\frac1{p} - \frac1{q})_{+}} $ is in line with 
other examples
of Nikolskii's inequality, since $\dim(S(\kappa,\Xi)) \sim q^{-d}$ when $\kappa$ is positive 
definite and $\Xi$ is quasiuniform.
\end{remark}
\begin{remark}\label{renorm}
We often write $s \in X$ as a renormalized series
$s(x)=\sum_{\xi \in \Xi} A_{p,\xi} v_{p,\xi} (x),$
where $v_{p,\xi}(x) := q^{-d/p}v_{\xi}(x)$ and  $A_{p,\xi} := q^{d/p} A_{\xi}$. 
\end{remark}
In establishing condition numbers, we employ three basic assumptions on the 
family of functions $(v_{\xi})_{\xi\in\Xi}$. 
We assume the basis consists of the Lagrange functions, 
the family is fairly localized (exhibiting rapid decay away from a center), 
and the basis functions are H\"{o}lder continuous, with a uniform seminorm that 
scales appropriately with the centers $\Xi$.  We now formally state these. 
\begin{assumption}[Lagrange Basis]\label{LagrangeBasis}
The basis $(v_{\xi})_{\xi\in\Xi}$ is the  Lagrange basis. I.e.,
$$v_{\xi} (\zeta) = \delta(\xi,\zeta)\quad\text{for}\ \xi,\zeta\in\Xi.$$
\end{assumption}
A consequence of this assumption is that coefficients are simply the sampled function 
values. Thus, the inequality we are after can be recast as a {\em sampling} inequality 
\begin{equation}\label{sampling}
c_1\|  s|_{\Xi}\|_{\ell_p(\Xi)} 
\le 
q^{-d/p} \left\|s \right\|_{L_p(\M)}
\le 
c_2 \| s|_{\Xi}\|_{\ell_p(\Xi)}. 
\end{equation}
Literature abounds in the establishment of similar inequalities. In particular, 
if the functions considered are polynomials, then these are often referred to 
as Marcinkiewicz--Zygmund inequalities (\cite{MZ}, \cite{filbir}, \cite{schmid}); 
if the functions considered are entire functions of exponential type, then these 
are referred to as Plancherel-P\'{o}lya type inequalities 
(\cite{P-P}, \cite{pesenson07}, \cite{pesenson09}).

The second assumption is a localization assumption, and is stated for a general 
family of functions (not necessarily a basis) indexed by the centers $\xi\in\Xi$. 
\begin{assumption}[Localization] \label{LagrangeDecay} 
For a family of functions $(v_{\xi})_{\xi\in \Xi}$,
 there exist positive constants $\nu$ and $C_1$
for which each individual function
$v_{\xi}\colon \ \M\to \comps$ satisfies
$$
|v_{\xi}(x)|
\le 
C_1 \exp\left(-\nu \frac{\min\bigl(\d(x,\xi),\injconst\bigr)}{q}\right).
$$ 
\end{assumption}
Under the assumption that the set $\Xi$ is  quasi-uniform, the following is 
equivalent to Assumption \ref{LagrangeDecay}:
\begin{equation}\label{LDecay2}
|v_{\xi}(x)|
\le 
C_1 \exp\left(-\nu' \frac{\min\bigl(\d(x,\xi),\injconst\bigr)}{h}\right).
\end{equation}
with $\nu' \rho= \nu$.  We will use them interchangeably according to convenience. 

Similar bounds have been established in \cite{HNW} for the Lagrange functions obtained
from a  large class of kernels. These are the kernels $\kappa_{m,\M}$, $m>d/2$, 
 the  reproducing kernels for the Sobolev space $W_2^m(\M)$ endowed with a 
certain inner-product.\footnote{More precisely, they are the reproducing kernels for the Sobolev spaces $W_2^m(\M)$ endowed with the inner product $\langle u,v \rangle_m = \sum_{k=0}^m\int_{\M} \langle \nabla^k u, \nabla^k v\rangle_x \dif x$, where $\nabla$ is the covariant derivative on $\M$, which maps tensor fields of rank $j$ to tensor fields of rank $j+1$. Although this is of fundamental importance to establishing the localness of the Lagrange basis (and
for bounding the Lebesgue constant) it is a technicality which does not directly play a major role here. We direct the interested reader to \cite[Section 2]{HNW} for details.}
An immediate corollary is that the Lebesgue constant $\sup_{x\in \M} \sum_{\xi\in\Xi}|\chi_{\xi}(x)|$ is bounded in terms of $C_1$, $\nu$, $\rho$ and $\M$ (cf. \cite[Theorem 4.6]{HNW}).

The final assumption is one of equicontinuity. 
\begin{assumption}[H\"{o}lder Continuity] \label{Bernstein} There is $0<\epsilon \le 1$ so  
that for any quasiuniform set $\Xi\subset \M$ with separation distance $0<q$,
$$
| v_{\xi}(x) - v_{\xi}(y)|
\le 
C_2 \left[\frac{\d(x,y)}{q}\right]^{\epsilon} 
$$ 
with constant $C_2$ depending only on $\epsilon, \M$ and the mesh ratio $\rho = h/q$ 
(but not on $\#\Xi$).
\end{assumption}
This final assumption is closely related to Bernstein inequalities, which have been investigated recently  on spheres in \cite{MNPW}, 
and have been demonstrated for a very large class of kernels. 
Namely, for all zonal kernels that are a perturbation
of the fundamental solution of an elliptic partial differential operator by
a convolution of the solution with an $L_1$ function (this is \cite[Theorem 5.5]{MNPW}). 
We remark that \cite[Section 5]{HNW} demonstrates that the kernel $\kappa_{2,\mathbb{S}^2}$ of order 2 on $\mathbb{S}^2$ is such a perturbation.
Using entirely different techniques, we will demonstrate that all kernels of the form 
$\kappa_{m,\M}$ satisfy Assumption \ref{Bernstein} in Lemma \ref{Bern_lemma} in the appendix.
\subsection{Upper Bounds}
%
%
We depart temporarily from using Lagrange functions to prove the upper bound,
since this property is not necessary. 
We treat families of functions  that merely satisfy the localization property, 
Assumption \ref{LagrangeDecay}, with centers that are quasiuniform.
\begin{proposition}\label{uppercomparison}
For quasiuniform centers $\Xi\subset \M$ and a family of functions
$(v_{\xi})_{\xi\in \Xi}$ satisfying Assumption \ref{LagrangeDecay},
we write $s = \sum_{\xi\in\Xi} A_{\xi} v_{\xi} = \sum_{\xi\in\Xi} A_{p,\xi} v_{p,\xi}$.
There exists a constant $c_2$, depending only on $\M$, $\rho$  and the constants 
appearing in Assumption \ref{LagrangeDecay} so that,
$$  \|s\|_p \le c_2\| A_{p,\cdot}\|_{\ell_p(\Xi)}.$$
\end{proposition}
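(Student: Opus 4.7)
The plan is to reduce the estimate to two uniform bounds on the family $(v_\xi)_{\xi\in\Xi}$---a pointwise bound $\sup_x \sum_\xi |v_\xi(x)| \le M_\infty$ and an individual $L_1$-bound $\|v_\xi\|_1 \le M_1 q^d$---and then interpolate between $p=1$ and $p=\infty$ with a single H\"{o}lder step. Writing $|v_\xi(x)| = |v_\xi(x)|^{1/p}|v_\xi(x)|^{1/p'}$ inside the sum defining $s$, H\"{o}lder's inequality yields
\[
|s(x)|^p \le \Bigl(\sum_{\xi\in\Xi}|v_\xi(x)|\Bigr)^{p-1}\sum_{\xi\in\Xi}|A_\xi|^p |v_\xi(x)|,
\]
and integration gives $\|s\|_p^p \le M_\infty^{p-1} M_1 q^d \sum_\xi |A_\xi|^p = M_\infty^{p-1}M_1\,\|A_{p,\cdot}\|_{\ell_p(\Xi)}^p$, which is the claim. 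The endpoints $p=\infty$ and $p=1$ fall out of the first and second bounds respectively.

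Both auxiliary bounds I would obtain by an annular decomposition around the relevant center. For $\sum_\xi |v_\xi(x)|$, partition $\Xi$ into shells $\Xi_k:=\Xi\cap\bigl(B(x,(k+1)q)\setminus B(x,kq)\bigr)$; Property 2 of the Manifold Properties yields $\#\Xi_k \le K_{\M}(k+1)^d$, while Assumption~\ref{LagrangeDecay} contributes at most $C_1 e^{-\nu k}$ per point on shells with $kq\le \injconst$, so summing gives $C_1 K_{\M}\sum_{k\ge 0}(k+1)^d e^{-\nu k}<\infty$. For $\|v_\xi\|_1$, the same decomposition combined with Property 1 (each shell has volume at most $\omega_{\M}(k+1)^d q^d$) gives $\int_{B(\xi,\injconst)}|v_\xi|\lesssim q^d \sum_{k\ge 0}(k+1)^d e^{-\nu k}= O(q^d)$.

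The main obstacle is the treatment of the ``far'' region $\d(x,\xi)>\injconst$, where the bound of Assumption~\ref{LagrangeDecay} saturates at the constant $C_1 e^{-\nu \injconst/q}$ and the exponential decay in $\d(x,\xi)$ is no longer available. The total cardinality $\#\Xi$ is bounded by a multiple of $q^{-d}$ (applying Property 2 with $R$ equal to the diameter of $\M$), and $\mathrm{vol}(\M)$ is fixed, so both tails are bounded by $e^{-\nu \injconst/q}$ times a polynomial in $1/q$. Since super-polynomial decay in $1/q$ beats $q^{-d}$ uniformly for all $q>0$, these tail contributions are harmless and the resulting $M_\infty$ and $M_1$ depend only on $\M$, $\rho$, $C_1$, and $\nu$, as asserted.
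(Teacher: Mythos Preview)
Your proof is correct and follows essentially the same approach as the paper: establish the $p=\infty$ bound via an annular decomposition (the Lebesgue-constant argument) and the $p=1$ bound via a uniform $L_1$-estimate $\|v_\xi\|_1\le M_1 q^d$, then interpolate. The only cosmetic difference is that you carry out the interpolation step by a direct H\"older/Schur argument, whereas the paper invokes Riesz--Thorin; both yield the same constant $c_2=M_1^{1/p}M_\infty^{1-1/p}$.
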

\begin{proof}
For $p=\infty$, the upper bound follows by a standard argument, decomposing the 
sum {\em en annuli}. 
This is the argument used to bound the Lebesgue constant in the proof of 
\cite[Theorem 4.6]{HNW} (in fact, if $v_{\xi}$ is the Lagrange basis, the upper 
bound we seek  is exactly the Lebesgue constant). Thus we have $\mathcal{L}$ for 
which
$$
\|s\|_{L_{\infty}(\M)}\le \mathcal{L}\|s|_{\Xi}\|_{\ell_{\infty}(\Xi)} 
=
\mathcal{L}\|A_{\infty,\cdot}\|_{\ell_{\infty}(\Xi)},
$$
For $p=1$, we have
$$
\int_{\M} |s(x)|\dif x 
\le 
C \|A_{1,\cdot}\|_{\ell_{1}(\Xi)}.
$$
Here we have used the fact that $\|v_{1,\xi}\|_1 \le C$ for some constant $C$ depending 
only on the manifold $\M$ and the constants in Assumption \ref{LagrangeDecay}.
A standard application of operator interpolation proves the other cases. The constant 
$c_2$ can be taken to be $C^{1/p} \mathcal{L}^{1-1/p}$.
\end{proof}
Note that in the proof above we did not use Assumption \ref{LagrangeDecay} in its 
full force. In fact, the following much less stringent decay condition
for the family of functions 
\[
|v_{p,\xi}(x)|\lesssim q^{-d/p}(1+\d(x,\xi)/q)^{-\mu}, \quad \mu>d,
\]
is sufficient for Proposition \ref{uppercomparison} to hold true.
%
%
%
%
%
\subsection{Lower bounds}
The lower bounds in this case are {\em sampling} inequalities, and the lower bound in 
the $L_{\infty}$ case follows immediately with constant $1$. The main challenge is to 
establish the $L_p$ lower comparison, and, in contrast with the upper bound, there is 
no convenient way to make use of Riesz--Thorin or other interpolation results. In our proof,  
it is not just the decay of the basis 
elements, but also their nature as Lagrange functions and their equicontinuity that is 
important. 
%
%
\begin{proposition}\label{lowercomparison}
Under Assumptions \ref{LagrangeBasis}, \ref{LagrangeDecay} and \ref{Bernstein},
there exist constants $c_1>0$ and $q_0>0$, so that for $q<q_0$, 
for $1\le p\le \infty$ and  for all $s\in X$,
$$
c_1 \| s|_{\Xi}  \|_{\ell_p(\Xi)}       
\le  
q^{-d/p} \|s\|_p.
$$
The constants $c_1$ and $q_0$ depend only on $\rho$, the constants appearing in Assumptions \ref{LagrangeDecay} and \ref{Bernstein} (namely $C_1$, $\nu$, $C_2$, $\epsilon$),  and on $\M$ (specifically on the constants appearing in Assumption \ref{Man}).
\end{proposition}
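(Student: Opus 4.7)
The strategy is to exploit the Lagrange property (which recasts the inequality as a sampling bound, $A_\xi=s(\xi)$) by integrating $|s|^p$ over disjoint balls $B_\xi:=B(\xi,\delta q)$, pointwise comparing $s(x)$ to $s(\xi)$, and tuning two parameters: a ball-radius factor $\delta\le 1/2$ and a cutoff distance $Rq$ with $R\ge 1$. The starting point is the triangle inequality in $L_p(B_\xi)$, $|A_\xi|\,\mathrm{vol}(B_\xi)^{1/p}\le\|s\|_{L_p(B_\xi)}+\|s-s(\xi)\|_{L_p(B_\xi)}$. Raising to the $p$-th power, using $(a+b)^p\le 2^{p-1}(a^p+b^p)$, summing over $\xi$, invoking Manifold Property~1 to give $\mathrm{vol}(B_\xi)\ge \alpha_\M(\delta q)^d$, and using disjointness so that $\sum_\xi\|s\|_{L_p(B_\xi)}^p\le\|s\|_p^p$, one obtains
$$\alpha_\M(\delta q)^d\|A\|_{\ell_p(\Xi)}^p\le 2^{p-1}\|s\|_p^p+2^{p-1}\sum_\xi\|s-s(\xi)\|_{L_p(B_\xi)}^p.$$
The task is to absorb the final sum back into the left-hand side.

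To estimate $|s(x)-s(\xi)|$ on $B_\xi$, I use the Lagrange property to write
$$|s(x)-s(\xi)|\le |A_\xi||v_\xi(x)-v_\xi(\xi)|+\sum_{\zeta\ne\xi}|A_\zeta||v_\zeta(x)|,$$
since $v_\zeta(\xi)=0$ for $\zeta\ne\xi$. The first term is bounded by $C_2\delta^\epsilon|A_\xi|$ via Assumption~\ref{Bernstein}. For each cross-term I have two complementary bounds on $|v_\zeta(x)|$: from Hölder continuity combined with $v_\zeta(\xi)=0$, $|v_\zeta(x)|\le C_2\delta^\epsilon$ (sharp for $\zeta$ close to $\xi$); and from localization (Assumption~\ref{LagrangeDecay}) combined with $d(x,\zeta)\ge d(\xi,\zeta)/2$, $|v_\zeta(x)|\le C_1\exp(-\nu\min(d(\xi,\zeta)/2,\inrad)/q)$ (sharp for $\zeta$ far). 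Splitting at $d(\xi,\zeta)=Rq$ and applying each bound where it is favorable gives
$$|s(x)-s(\xi)|\le C_2\delta^\epsilon\bigl(|A_\xi|+N_{\le R}(\xi)\bigr)+N_{>R}(\xi),$$
where $N_{\le R}(\xi):=\sum_{0<d(\xi,\zeta)\le Rq}|A_\zeta|$ and $N_{>R}(\xi):=C_1\sum_{d(\xi,\zeta)>Rq}|A_\zeta|e^{-\nu\min(d(\xi,\zeta)/2,\inrad)/q}$.

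Both $N_{\le R}$ and $N_{>R}$ are discrete convolutions against symmetric kernels on $\Xi$. Manifold Property~2 bounds the row sums of the near kernel by $K_\M R^d$; the far kernel has row sums bounded by a geometric--polynomial tail $C^\ast(R)=O(R^d e^{-\nu R/2})\to 0$ as $R\to\infty$, uniformly for $q$ below some $q_0$ (the $\inrad$-truncation contributes $\lesssim q^{-d}e^{-\nu\inrad/q}$, which is negligible for small $q$). Schur's test then yields $\|N_{\le R}\|_{\ell_p}\le K_\M R^d\|A\|_{\ell_p}$ and $\|N_{>R}\|_{\ell_p}\le C^\ast(R)\|A\|_{\ell_p}$ for every $p\in[1,\infty]$. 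Minkowski in $\ell_p(\Xi)$ combines everything into
$$\sum_\xi\|s-s(\xi)\|_{L_p(B_\xi)}^p\le\omega_\M(\delta q)^d\Psi(\delta,R)^p\|A\|_{\ell_p}^p,\quad \Psi(\delta,R):=C_2\delta^\epsilon(1+K_\M R^d)+C^\ast(R).$$
Plugging back in gives $[\alpha_\M-2^{p-1}\omega_\M\Psi^p](\delta q)^d\|A\|_{\ell_p}^p\le 2^{p-1}\|s\|_p^p$. I first choose $R$ large enough that $C^\ast(R)\le \alpha_\M/(4\omega_\M)$, then $\delta$ small enough that $C_2\delta^\epsilon(1+K_\M R^d)\le\alpha_\M/(4\omega_\M)$, forcing $\Psi\le\alpha_\M/(2\omega_\M)$. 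The required condition $(2\Psi)^p\le\alpha_\M/\omega_\M$ is tightest at $p=1$ (since $(\alpha_\M/\omega_\M)^{1/p}$ is increasing in $p$), and it is met there by construction, so the bracket exceeds $\alpha_\M/2$ uniformly in $p\ge 1$. This yields $\|A\|_{\ell_p}\le C q^{-d/p}\|s\|_p$ with a constant $c_1=1/C$ bounded below uniformly in $p$. The endpoint $p=\infty$ is trivial: $\|A\|_{\ell_\infty}=\sup_\xi|s(\xi)|\le\|s\|_\infty$. The constraint $q<q_0$ is needed to keep $\delta q\le\inrad$ (so Manifold Property~1 applies on $B_\xi$) and to render the far-tail in $C^\ast(R)$ small.

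The principal obstacle is that the naive pointwise estimate of $\sum_{\zeta\ne\xi}|A_\zeta||v_\zeta(x)|$ costs $\|A\|_{\ell_1}$, which is far too large. Hölder continuity alone (sharp only near $\xi$) and localization alone (which produces an $O(1)$ kernel, independent of $\delta$, per term) each fail to yield an $\ell_p$-bounded operator whose norm can be made small. The essential maneuver is pairing the two estimates on complementary regimes of $d(\xi,\zeta)$ and then converting pointwise bounds to $\ell_p$ bounds via Schur/Young; only in this way can the error be made strictly smaller than the main quantity.
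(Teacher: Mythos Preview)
Your proof is correct and follows essentially the same strategy as the paper: integrate over disjoint balls $B(\xi,\gamma q)$, split the off-diagonal contribution by distance into a near part (controlled by H\"older continuity, Assumption~\ref{Bernstein}, yielding the crucial $\gamma^{\epsilon}$ smallness) and a far part (controlled by localization, Assumption~\ref{LagrangeDecay}), and then tune first the cutoff radius and then the ball radius, with $q$ taken small to handle the $\inrad$-truncated tail. The paper carries out the off-center estimate (its Lemma~\ref{offcenter}) via repeated use of the inequality $|\sum_{j=1}^n a_j|^p \le n^{p-1}\sum_j |a_j|^p$ together with a dyadic annular decomposition, whereas you package the same content as a Schur (row-sum) bound on a symmetric discrete convolution kernel followed by Minkowski in $\ell_p$; this is a somewhat cleaner bookkeeping of the same computation rather than a genuinely different argument.
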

\begin{proof}
The $L_{\infty}$ case follows immediately with constant $1$. We will therefore assume 
in the rest of the proof that $1 \le p < \infty$.

We estimate the $L_p$ norm of the function by integrals over a union of disjoint balls 
$\{B(\xi,\gamma q)\colon \  \xi \in \Xi\} $ with $\gamma\le 1$:
$$
\|s\|_p^p 
= 
\int_{\M}\Big|\sum_{\zeta\in\Xi} s(\zeta) v_{\zeta}(x)\Big|^p \dif x 
\ge 
\sum_{\xi\in \Xi}
  \int_{B(\xi,\gamma q)}
    \Big|\sum_{\zeta\in\Xi} s(\zeta) v_{\zeta}(x)\Big|^p 
  \dif x .
$$
The H\"{o}lder continuity assumption implies that, for $\gamma>0$ 
sufficiently small $|v_{\xi}(x)|\ge \frac{2}{3}$ in a neighborhood $B(\xi,\gamma q)$ 
(the choice of $\gamma^{\epsilon} \le1/(3 C_2)$ does the trick). 
Consequently, we have, for each fixed $\xi\in\Xi$, that
$$
\int_{B(\xi,\gamma q)}  |v_{\xi}(x)|^p\dif x 
\ge
\min_{x\in B(\xi,\gamma q)}   |v_{\xi}(x)|^p
\times \bigl(\mathrm{vol}\bigl(B(\xi,\gamma q)\bigr)\bigr) 
\ge
\left(\frac{2}{3}\right)^p \alpha_{\M}  (\gamma q)^d.
$$
We now use a diagonal dominance argument: 
we show that, for each fixed $\xi \in \Xi$, the integral 
$\int_{B(\xi,\gamma q)}
    |\sum_{\zeta\in\Xi} s(\zeta) v_{\zeta}(x)|^p 
  \dif x $ 
is dominated by the ``principal term'' 
$\int_{B(\xi,\gamma q)}  |s(\xi) v_{\xi}(x)|^p  \dif x $. 
To show this, we estimate the contribution of the {\em off-center} terms.
In short, we have the estimate
$$
\sum_{\xi\in\Xi}
  \int_{B(\xi,\gamma q)}
    \Big|\sum_{\zeta\in\Xi} s(\zeta) v_{\zeta}(x)\Big|^p 
  \dif x
  \ge  
\sum_{\xi\in\Xi} 
  \left( 
    2^{1-p} \int_{B(\xi, \gamma q)}| s(\xi) v_{\xi}(x)|^p \dif x
    - 
    \int_{B(\xi, \gamma q)}
      \Big|\sum_{\zeta\neq \xi} s(\zeta) v_{\zeta}(x)\Big|^p 
    \dif x 
  \right).\nonumber
  $$
This is a consequence of the following inequality which we use at times throughout the paper, and state here formally: 
\begin{equation}\label{finite_holder}\left|\sum_{j=1}^n a_j\right|^p\le n^{p-1}\sum_{j=1}^n |a_j|^p.
\end{equation}
In this case we have applied (\ref{finite_holder}) with $n=2$, $a_1 = \sum_{\zeta\in\Xi} |s(\zeta) v_{\zeta}(x)|$
and $a_2 = - \sum_{\zeta\in\Xi\setminus\xi} |s(\zeta) v_{\zeta}(x)|$, which after a manipulation
gives $ |a_1|^p\ge 2^{1-p}(a_1+a_2)^p - |a_2|^p $.
Combining this with the above estimate of the principal term gives
\begin{equation}
\|s\|_p^p
\ge
\sum_{\xi\in\Xi}  
  \left(
    \frac{2\alpha_{\M}}{3^{p}}  (\gamma q)^d |s_{\xi}|^p 
    -  
    \int_{B(\xi,\gamma q)}
      \Big|\sum_{\zeta\neq \xi} s(\zeta) v_{\zeta}(x)\Big|^p 
    \dif x
  \right). \label{first_split}
\end{equation}

By shrinking $\gamma$, the sum  $\sum_{\xi\in\Xi} \int_{B(\xi,\gamma q)}
      |\sum_{\zeta\neq \xi} s(\zeta) v_{\zeta}(x)|^p 
    \dif x$ can be made as small as desired.
In particular as shown in Lemma~\ref{offcenter} below, the off-center sum can be made less than one half of the principal part of \eqref{first_split}, $\sum_{\xi\in\Xi}\frac{2\alpha_{\M}}{3^{p}} (\gamma q)^d |s_\xi|^p$, and the choice of $\gamma$ depends only on $\M$ and the constants from Assumptions~\ref{LagrangeDecay} and \ref{Bernstein}, and not all on $p$. As a consequence
\[
 \|s\|^p_p \ge \frac{\alpha_\M}{3^p} (\gamma q)^d \sum_{\xi\in \Xi} |s(\xi)|^p.
\]
Thus
\begin{align}
 c_1 &= \min_{p\in [1,\infty]} (\alpha_\M\gamma^d)^{1/p}/3\notag\\
&= \min(\alpha_\M \gamma^d/3,1).\tag*{$\qed$}
\end{align}
\renewcommand{\qed}{}\end{proof}


\begin{lemma}\label{offcenter}
There exist two constants $q_0$ and $R$, depending only on the manifold 
$\M$ and the constants from Assumptions \ref{LagrangeDecay} and \ref{Bernstein}, 
such that the inequality
$$
\sum_{\xi\in\Xi} 
  \int_{B(\xi,\gamma q)}
      \Big|\sum_{\zeta\neq \xi} s(\zeta) v_{\zeta}(x)\Big|^p 
   \dif x
\le
\frac{ \alpha_{\M}}{ 3^{p}}(\gamma q)^d \sum_{\xi\in\Xi}  |s(\xi)|^p
$$
holds for all $s\in X$ and all $p\in [1,\infty)$ when $q < q_0$ and $\gamma < R$.
\end{lemma}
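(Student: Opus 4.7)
The plan is to exploit the Lagrange property $v_\zeta(\xi)=0$ for $\zeta\neq\xi$ in combination with Hölder continuity (Assumption~\ref{Bernstein}) to obtain pointwise bounds on the off-center values $|v_\zeta(x)|$ that vanish with $\gamma$, \emph{independently of $p$}. The delicate point---and the main obstacle---will be exactly this uniformity in $p$.

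First I would note that for $x\in B(\xi,\gamma q)$ and $\zeta\neq\xi$, the Lagrange property together with Assumption~\ref{Bernstein} yields $|v_\zeta(x)|=|v_\zeta(x)-v_\zeta(\xi)|\le C_2\gamma^{\epsilon}$. Taking the geometric mean of this bound with the decay bound of Assumption~\ref{LagrangeDecay} produces
$$|v_\zeta(x)|\le\sqrt{C_1 C_2}\,\gamma^{\epsilon/2}\exp\bigl(-\tfrac{\nu}{2}\min(\d(x,\zeta),\inrad)/q\bigr).$$
Using this estimate in tandem with the annular point-counting and volume bounds of Assumption~\ref{Man}, I would derive constants $C',C''$ (depending only on $\M$ and the constants from Assumptions~\ref{LagrangeDecay} and \ref{Bernstein}) for which
$$D(\gamma):=\sup_{\xi,\,x\in B(\xi,\gamma q)}\sum_{\zeta\neq\xi}|v_\zeta(x)|\le C'\gamma^{\epsilon/2},\qquad E(\gamma):=\sup_{\zeta}(\gamma q)^{-d}\sum_{\xi\neq\zeta}\int_{B(\xi,\gamma q)}|v_\zeta(x)|\,\dif x\le C''\gamma^{\epsilon/2}.$$
The $\inrad$-cutoff inside the exponent is handled by the choice of $q_0$: contributions from points with $\d(\cdot,\zeta)>\inrad$ total at most $\#\Xi\cdot C_1 e^{-\nu\inrad/(2q)}\lesssim q^{-d}e^{-\nu\inrad/(2q)}$, which is negligible once $q<q_0$.

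Next I would apply Hölder's inequality with the weights $|v_\zeta(x)|$,
$$\Big|\sum_{\zeta\neq\xi}s(\zeta)v_\zeta(x)\Big|^p\le\Big(\sum_{\zeta\neq\xi}|v_\zeta(x)|\Big)^{p-1}\sum_{\zeta\neq\xi}|s(\zeta)|^p|v_\zeta(x)|\le D(\gamma)^{p-1}\sum_{\zeta\neq\xi}|s(\zeta)|^p|v_\zeta(x)|,$$
then integrate over $B(\xi,\gamma q)$, sum over $\xi$, and swap the order of summation to obtain
$$\sum_{\xi}\int_{B(\xi,\gamma q)}\Big|\sum_{\zeta\neq\xi}s(\zeta)v_\zeta(x)\Big|^p\dif x\le D(\gamma)^{p-1}E(\gamma)(\gamma q)^d\sum_{\zeta}|s(\zeta)|^p.$$

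To close the argument, I pick $R$ small enough that $C'R^{\epsilon/2}\le 1/3$ and $C''R^{\epsilon/2}\le\alpha_{\M}/3$. Then for any $\gamma<R$ and any $p\ge 1$,
$$D(\gamma)^{p-1}E(\gamma)\cdot 3^p=3E(\gamma)\bigl(3D(\gamma)\bigr)^{p-1}\le 3E(\gamma)\le\alpha_{\M},$$
yielding the claimed $\frac{\alpha_{\M}}{3^p}(\gamma q)^d\sum|s(\xi)|^p$ bound. The hard part is precisely this uniformity in $p$: a naive Hölder estimate using only Assumption~\ref{LagrangeDecay} would replace $D(\gamma)$ by the Lebesgue constant $\mathcal{L}$, giving a factor $\mathcal{L}^{p-1}$ that one cannot absorb into $\alpha_{\M}/3^p$ without letting $\gamma$ depend on $p$. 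The Hölder continuity, together with $v_\zeta(\xi)=0$, supplies the missing power of $\gamma$ that reduces this Lebesgue-type constant below $1/3$.
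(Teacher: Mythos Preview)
Your proof is correct and takes a genuinely different route from the paper's. The paper decomposes the off-center sum into three zones according to $\d(\zeta,\xi)$: a near zone $\d(\zeta,\xi)\le\Gamma q$ (handled via Assumption~\ref{Bernstein}), an intermediate zone $\Gamma q<\d(\zeta,\xi)\le\inrad$ (handled via a dyadic decomposition and Assumption~\ref{LagrangeDecay}), and a far zone $\d(\zeta,\xi)>\inrad$ (handled by choosing $q_0$ small). It then fixes the parameters in the order $\Gamma$, then $q_0$, then $\gamma$, and in each zone uses the crude inequality $|\sum a_j|^p\le n^{p-1}\sum|a_j|^p$. Your argument instead takes the geometric mean of the H\"older bound $|v_\zeta(x)|\le C_2\gamma^{\epsilon}$ and the decay bound of Assumption~\ref{LagrangeDecay}, obtaining a single pointwise estimate with an explicit $\gamma^{\epsilon/2}$ prefactor, and then runs a Schur-test (weighted H\"older) estimate to produce the bound $D(\gamma)^{p-1}E(\gamma)(\gamma q)^d\sum|s(\zeta)|^p$. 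This avoids the three-zone split and the dyadic sums entirely, and makes the uniformity in $p$ transparent: once $3D(\gamma)\le 1$, the factor $(3D(\gamma))^{p-1}$ is harmless. The paper's decomposition, by contrast, exposes more clearly which assumption does what work in which regime and may yield more explicit constants, but at the cost of a considerably longer computation (including the auxiliary Lemma~\ref{Inc_Gamma}).
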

\begin{proof}
Given $\gamma \le 1$, $\Gamma \ge 1$, $q\le \Gamma/\injconst$ and $\xi \in \Xi$, we write
$$  \int_{B(\xi,\gamma q)}\Big|\sum_{\zeta\neq \xi} s(\zeta) v_{\zeta}(x)\Big|^p \dif x 
\le 
 (I_{\xi}+II_{\xi}+III_{\xi}),$$
where, using (\ref{finite_holder}) with $n=3$, we have
 \begin{align*}
I_{\xi}
&:=
3^{p-1}
\int_{B(\xi,\gamma q)}
  \left| \sum_{\substack{\zeta\neq \xi \\ \d(\zeta, \xi)\le \Gamma q}} s(\zeta) v_{\zeta}(x)\right|^p 
\dif x \\
II_{\xi} 
&:= 
3^{p-1} 
\int_{B(\xi,\gamma q)}
  \left|\sum_{\Gamma q<\d(\zeta,\xi)\le \injconst} s(\zeta) v_{\zeta}(x)\right|^p 
\dif x \\
III_{\xi}
&:= 
3^{p-1}
\int_{B(\xi,\gamma q)}
  \left|\sum_{\d(\zeta,\xi)> \injconst} s(\zeta) v_{\zeta}(x)\right|^p 
\dif x.
\end{align*}
We will bound the three sums 
$\sum_{\xi \in \Xi}I_{\xi}, \ \sum_{\xi \in \Xi}II_{\xi},$ and $\sum_{\xi \in \Xi}III_{\xi}$
so that each is less than $\frac{1}{3} \frac{ \alpha_{\M}}{ 3^{p}}(\gamma q)^d \sum_{\xi\in\Xi}  |s(\xi)|^p$.

To obtain the desired estimate, we adopt the following strategy:
\begin{enumerate} 
\item we first choose $\Gamma$ sufficiently large to bound $\sum II_{\xi}={o}\bigl(\Gamma^{-1} \bigr)\times(\gamma q)^d \sum_{\xi\in\Xi}  |s(\xi)|^p$, 
\item  we then choose $q$ sufficiently small to bound $\sum III_{\xi}=o\bigl(q\bigr) \times(\gamma q)^d \sum_{\xi\in\Xi}  |s(\xi)|^p$, at the same time keeping $q<\injconst/\Gamma$, 
\item we  finally show that $\sum I_{\xi}\le (\mathcal{K} \Gamma^{d} \gamma^{\epsilon })^p (\gamma q)^d \sum_{\xi\in\Xi}  |s(\xi)|^p$ 
where  $\mathcal{K}$ is a constant depending only on $\M$ and the constants from Assumptions \ref{LagrangeDecay} and \ref{Bernstein}, which allows us to choose $\gamma$.
\end{enumerate}
The order in which we carry this out is important, because both $q$ and $\gamma$ depend on 
$\Gamma$, and we wish to be able to choose them independently of $p$.

{\bf Step 1, estimating $II$:} The quantity $II_{\xi}$ is subdivided into 
(comparable to) $|\log q|$-many terms, 
each term being a sum over a dyadic region:
$$
\Omega_k
:=
\Omega_k(\xi)
:=
\{
  \zeta\in\Xi 
  \mid 
  \Gamma 2^k q \le \d(\xi,\zeta) \le \Gamma 2^{k+1} q
\},\quad k=0, 1, \ldots, N_{q},
$$ 
in which
$2^{N_q}\sim \frac{\injconst}{\Gamma q}$. 
This means that, for
$M_k
:=
\int_{B(\xi, \gamma q)}
  \left|
    \sum_{\zeta\in \Omega_k} 
      s(\zeta) v_{\zeta}(x)
  \right|^p 
\dif x, 
$
$$
II_{\xi} 
\le  
(3)^{p-1}\sum_{k=0}^{N_{q} }
2^{(p-1)(k+1)}
  \int_{B(\xi,\gamma q)}
    \left|\sum_{\zeta \in \Omega_k} s(\zeta) v_{\zeta}(x)\right|^p 
  \dif x 
= 
(3)^{p-1}\sum_{k=0}^{N_q} 2^{(p-1)(k+1)} M_k,
$$
where the above inequality follows from
$
 \left|\sum^n_{j=1} a_j\right|^p \le \sum^n_{j=1} 2^{j(p-1)} |a_j|^p.
$
This follows easily from repeated application of (\ref{finite_holder}) with $n=2$. We note
that  (\ref{finite_holder})
could be applied directly to estimate $II_{\xi}$, but at a cost 
of incurring a factor $N_q^{p-1}$, which depends on $q$.

We now estimate the contribution from each $M_k$, the portion of $II_{\xi}$ 
coming from the dyadic interval $\Omega_k$.
By using the inequality $ |\sum_{j=1}^n a_j|^p\le n^{p-1}\sum |a_j|^p$, we have
\begin{align}
M_k &\le 
\left(\# \Omega_k\right)^{p-1} \times
\sum_{\zeta\in  \Omega_k} \int_{B(\xi,\gamma q)}|s(\zeta)v_{\zeta}(x)|^p \dif x\nonumber\\
&\le
\left(\# \Omega_k\right)^{p-1}\times \max_{\zeta\in\Omega_k} \|v_{\zeta}\|_{L_{\infty}(B(\xi,\gamma q))}^p
\times \mathrm{vol}(B(\xi,\gamma q))\times \sum_{\zeta\in\Omega_k} |s(\zeta)|^p \nonumber\\
&\le
\left(K_{\M}(2^{k+1}\Gamma)^d\right)^{p-1} \times
\left(C_1^p  \exp(-\nu p\Gamma  2^k) \right)\times
\left(\omega_{\M} (\gamma q)^d \right)\times
\sum_{\zeta\in \Omega_k}|s(\zeta)|^p.\label{holder_trick}
\end{align}
In the final line, we have used the estimate $\#\Omega_k\le K_{\M} (2^{k+1} \Gamma)^d$,
and Assumption \ref{LagrangeDecay}, namely that for
 $\zeta \in \Omega_k$ and $x\in B(\xi,\gamma q)$, we have
$|v_{\zeta} (x)|\le C_1 \exp(-\nu \Gamma  2^k)$. 
Multiplying by $3^{p-1}2^{(p-1)(k+1)}$ and summing from $0$ to $N_q$, we obtain
(after rearranging some terms)
\begin{equation}
II_{\xi}
\le
C_1 \omega_{\M}
(3 K_{\M} C_12^{d+1})^{p-1}  
(\gamma q)^d 
\left( 
  \sum_{k=0}^{N_q}  
    (2^{k(d+1)}\Gamma^{d})^{p-1}
    \exp(-\nu p \Gamma  2^k)  
     \left[
       \sum_{\zeta\in \Omega_k}|s(\zeta)|^p
     \right] 
\right)
\label{TWO}
\end{equation}
We introduce the notation $\mathcal{K}_{II}:= 3 K_{\M} C_1 2^{d+1}$.
Summing the $II_{\xi}$ over $\xi \in \Xi$ and  using the inequality 
$\#\{\xi\colon \ \zeta\in \Omega_k(\xi)\}\le K_{\M} (2^{k+1} \Gamma)^d$ 
(this follows from the fact that, for a given $\zeta\in \Xi$, 
$\#\{\xi\colon \ \zeta\in \Omega_k(\xi)\} = \# \Omega_k(\zeta)$, since
the condition defining these sets is symmetric in $\xi$ and $\zeta$) 
we obtain,
\begin{align*}
\sum_{\xi\in \Xi} II_{\xi}&\le
C_1 \omega_{\M}
\mathcal{K}_{II}^{p-1}
 (\gamma q)^d 
\left( 
  \sum_{k=0}^{N_q} 
    (2^{k(d+1)}\Gamma^d)^{p-1} 
    \exp(-\nu p \Gamma  2^k)  
    \sum_{\xi\in\Xi}
     \left[
       \sum_{\zeta\in \Omega_k}|s(\zeta)|^p
     \right] 
\right)\\
&\le
C_1 \omega_{\M}
\mathcal{K}_{II}^{p-1}
 (\gamma q)^d 
\left( 
  \sum_{k=0}^{N_q} 
    (2^{k(d+1)}\Gamma^d)^{p-1} 
    \exp(-\nu p \Gamma  2^k)
    (K_{\M} (2^{k+1}\Gamma)^{d})  
     \left[
       \sum_{\zeta\in \Xi}|s(\zeta)|^p
     \right] 
\right)\\
&\le
\frac{\omega_{\M}}{3}
\mathcal{K}_{II}^{p} 
\frac{(\gamma q)^d}{\Gamma^p}
\left( 
  \sum_{k=0}^{N_q}  
    (2^{k}\Gamma)^{(d+1)p} 
    \exp(-\nu p \Gamma  2^k)  
    \right) \left[
     \sum_{\zeta\in\Xi}
         |s(\zeta)|^p
     \right].
 \end{align*}
 In the final inequality, we have used the fact that $2^{(k+1)d} \le 2^{k(d+1)}\times 2^{d+1}$
 and that $\Gamma^{dp} = \frac{\Gamma^{(d+1)p}}{\Gamma^p}$. 
 After some minor cleaning up, we have
$$
\sum_{\xi\in \Xi} II_{\xi}
\le
\frac{\omega_{\M}}{3}
\mathcal{K}_{II}^{p} 
\left( 
  \Gamma^{dp}\exp(-\nu p \Gamma) +
  \frac{2}{\Gamma^{p}}
  \int_{\Gamma}^{\infty}  
    \exp\bigl(-\nu p r\bigr)  
    r^{(d+1)p-1} 
  \dif r 
\right)
  (\gamma q)^d
     \|s_{|_{\Xi}}\|_{\ell_p(\Xi)}^p.$$
This estimate holds because $r\mapsto r^{p(d+1)} \exp(-\nu p r)$ is decreasing for $r\ge (d+1)/\nu$.
Because we would like to bound this final expression by $\frac{\alpha_{\M}}{3^{p+1}} (\gamma q)^d
     \|s_{|_{\Xi}}\|_{\ell_p(\Xi)}^p$, it suffices to take
$$
\max\left( \Gamma^{dp}\exp(-\nu p \Gamma),
  \frac{2}{\Gamma^{p}}
  \int_{\Gamma}^{\infty}  
    \exp\bigl(-\nu p r\bigr)  
    r^{(d+1)p-1} 
  \dif r 
\right)
\le 
\frac{\alpha_{\M}}{2\omega_{\M}} (9 C_1 K_{\M} 2^{d+1})^{-p}.
$$
As we demonstrate in Lemma \ref{Inc_Gamma}, using $\epsilon^{-1} = \frac{2\omega_{\M}}{\alpha_{\M}}9 C_1 K_{\M} 2^{d+1}$, this can be accomplished for a choice of $\Gamma$ depending only on $\M$ and the
constants from Assumption \ref{LagrangeDecay}.

{\bf Step 2, estimating $III$:} 
Mimicking the estimate (\ref{holder_trick}) we have
\begin{align*}
\mathbin{III}_{\xi}&\le 
3^{p-1}\left(\# \{\zeta\in \Xi \mid \d(\zeta,\xi)>\injconst\} \right)^{p-1} \times
\sum_{\d(\zeta,\xi)>\injconst} \int_{B(\xi,\gamma q)}|s(\zeta)v_{\zeta}(x)|^p \dif x\\
&\le
3^{p-1}\left(\# \Xi \right)^{p-1}\times \max_{\d(\zeta,\xi)>\injconst} \|v_{\zeta}\|_{L_{\infty}(B(\xi,\gamma q))}^p
\times \mathrm{vol}(B(\xi,\gamma q))\times \sum_{\d(\zeta,\xi)>\injconst} |s(\zeta)|^p
\end{align*} 
It is easy to estimate the cardinality of $\Xi$, since 
$\mathrm{vol}(\M)\ge \sum_{\xi\in\Xi} \mathrm{vol}(B(\xi,q))$. 
By using Assumption \ref{LagrangeDecay} to estimate 
$\max_{\{\zeta\mid\d(\zeta,\xi)>\injconst\}} \|v_{\zeta}\|_{L_{\infty}(B(\xi,\gamma q))}^p$, we obtain
\begin{align*}
\mathbin{III}_{\xi}&\le
3^{p-1}\left(\frac{\mathrm{vol}(\M)}{\alpha_{\M} q^{d}} \right)^{p-1}
C_1^p\exp\left(-\nu p \frac{\injconst}{q}\right) \omega_{\M}  (\gamma q)^d 
\left[
  \sum_{\d(\zeta,\xi)> \injconst}
    |s(\zeta)|^p
\right]
\nonumber\\
&=
C_1\omega_{\M}
\mathcal{K}_{III}^{p-1}
q^{-d(p-1)}
\exp\left(-\nu p \frac{\injconst}{q}\right)
 (\gamma q)^d 
\left[
  \sum_{\d(\zeta,\xi)> \injconst}
    |s(\zeta)|^p
\right],
\end{align*}
with $\mathcal{K}_{III} := 3 C_1 \mathrm{vol}(\M)/\alpha_{\M}. $
Summing over $\xi \in \Xi$, we have
\begin{align*}
\sum_{\xi\in\Xi} \mathbin{III}_{\xi} &\le
C_1\omega_{\M}
\mathcal{K}_{III}^{p-1}
q^{-d(p-1)}
\exp\left(-\nu p \frac{\injconst}{q}\right)
 (\gamma q)^d 
 \sum_{\xi\in\Xi}
\left[
  \sum_{\d(\zeta,\xi)> \injconst}
    |s(\zeta)|^p
\right]\\
&\le
C_1\omega_{\M}
\mathcal{K}_{III}^{p-1}
q^{-d(p-1)}
\exp\left(-\nu p \frac{\injconst}{q}\right)
 (\gamma q)^d 
 \left[
  \sum_{\zeta\in \Xi}
 \left(\frac{\mathrm{vol}(\M)}{\alpha_{\M} q^{d}} \right)
    |s(\zeta)|^p
\right]\\
&=
\frac{\omega_{\M}}{3}
\mathcal{K}_{III}^{p}
q^{-dp}
\exp\left(-\nu p \frac{\injconst}{q}\right)
 (\gamma q)^d 
\left[
  \sum_{\zeta\in \Xi}
    |s(\zeta)|^p
\right].
\end{align*}
We wish to ensure that $\sum_{\xi\in\Xi} III_{\xi}\le \frac{1}{3}\frac{ \alpha_{\M}}{ 3^{p}}(\gamma q)^d \sum_{\xi\in\Xi}  |s(\xi)|^p$, so it suffices to take $q\le q_0$, where
$q_0\le \inj/\Gamma$ is the largest value for which the condition
 $0\le q\le q_0$ implies 
$q^{-d} \exp\left(-\nu r_\M/q\right)
\le \frac{\alpha^2_\M}{\omega_\M} (9C_1 \text{ vol}(\M))^{-1}$.

{\bf Step 3, estimating $I$:} To estimate $I_{\xi}$, we make use of  bounds on $v_{\zeta}$  derived from the H{\"o}lder continuity assumption and the fact that the Lagrange function vanishes at $\xi$, when
$\xi \ne \zeta$, to get
\begin{align*}
I_{\xi} 
&\le 
3^{p-1}\int_{B(\xi, \gamma q)} 
  (K_{\M} \Gamma^d )^{p-1} (C_2 \gamma^{\epsilon})^p  
  \sum_{\d(\zeta,\xi) \le \Gamma q} 
    |s(\zeta)|^p 
\dif x \\
 &\le 
\frac{1}{3}3^{p}K_{\M}^{p-1}C_2^p 
 \Gamma^{d(p-1)} \gamma^{\epsilon p} 
  \omega_{\M} (\gamma q)^d 
 \sum_{\d(\zeta,\xi)\le \Gamma q}   |s(\zeta)|^p.
 \end{align*}
In the first inequality, we use the bound
\[
 |v_{\zeta}(x)|\le C_2 \gamma^{\epsilon}, \quad \text{for $x$ satisfying}\quad \d (x,\xi) \le \gamma q.
\]
and the estimate on the number of centers
\[
 \#\bigl(\Xi\cap B(\xi, \Gamma q)\bigr) \le K_{\M} \Gamma^d.
\]
The third inequality follows from the simple estimate $\mathrm{vol}(B(\xi,\gamma q))\le \omega_{\M} (\gamma q)^d$.

Summing over $\xi \in\Xi$, we obtain:
\[
\sum_{\xi\in \Xi} I_{\xi}\le
\frac{\omega_{\M}}{3}3^{p}K_{\M}^{p-1}C_2^p   \Gamma^{d(p-1)}\gamma^{d+\epsilon p}q^d \sum_{\xi\in\Xi} \sum_{\d(\zeta,\xi)\le \Gamma q} |s(\zeta)|^p
\le \frac{\omega_{\M}}{3}\left(\mathcal{K} \Gamma^{d}\gamma^{\epsilon} \right)^p(\gamma q)^d
\|s_{|_{\Xi}}\|_{\ell_p(\Xi)}^p,
\]
where $\mathcal{K} := 3 K_{\M}C_2$. 
The final estimate results by exchanging the two summations, and employing the fact that $\#\{\xi \in \Xi\colon \ \d(\zeta,\xi)\le \Gamma q\} \le K_{\M} \Gamma^d$.

Thus, in order to force $\sum_{\xi\in \Xi} I_{\xi}\le \frac{1}{3} \frac{ \alpha_{\M}}{ 3^{p}} (\gamma q)^d\|s_{|_{\Xi}}\|_{\ell_p(\Xi)}^p$, 
the choice of  $\gamma^{\epsilon} < \frac{\alpha_{\M}}{\omega_{\M}}(9 K_{\M}C_2 \Gamma^d )^{-1}$
suffices.

This completes the proof of the lemma.
\end{proof}

\begin{lemma}\label{Inc_Gamma} 
Let $\nu>0$, $d\ge 1$ . For every $\epsilon>0$ there is 
$\Gamma_0\ge(d+1)/\nu$, (depending on $\nu,\,d,\, \epsilon$) so that for $\Gamma\ge \Gamma_0$ and for all $p\in [1,\infty)$,
$$\max\left( \Gamma^{dp}\exp(-\nu p \Gamma),
  \frac{2}{\Gamma^{p}}
  \int_{\Gamma}^{\infty}  
    \exp\bigl(-\nu p r\bigr)  
    r^{(d+1)p-1} 
  \dif r 
\right)
\le 
\epsilon^{p}.$$
\end{lemma}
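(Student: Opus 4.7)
The plan is to bound the two terms of the maximum separately, and in each case to reduce to the single elementary fact that $\Gamma^d e^{-\nu \Gamma} \to 0$ as $\Gamma \to \infty$. The subtlety, which is the main obstacle, is that the parameter $p$ enters every exponent, so all thresholds must be chosen uniformly in $p \in [1,\infty)$; naively applied decay arguments produce constants that blow up with $p$.

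The first quantity is immediate. Write $\Gamma^{dp} e^{-\nu p \Gamma} = (\Gamma^d e^{-\nu \Gamma})^p$. Since $r \mapsto r^d e^{-\nu r}$ is eventually decreasing and tends to $0$, I can choose a threshold so large that $\Gamma^d e^{-\nu \Gamma} \le \epsilon$ for all $\Gamma$ above it; then $(\Gamma^d e^{-\nu \Gamma})^p \le \epsilon^p$ follows for every $p \ge 1$.

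For the integral term, the key idea is to split the exponential as $e^{-\nu p r} = e^{-\nu p r/2} \cdot e^{-\nu p r/2}$ and treat $g_p(r) := r^{(d+1)p-1} e^{-\nu p r/2}$ as a single factor. A logarithmic derivative computation shows $g_p$ is decreasing on $[\Gamma, \infty)$ whenever $\Gamma \ge 2\bigl((d+1)p - 1\bigr)/(\nu p)$, and in particular whenever $\Gamma \ge 2(d+1)/\nu$, a threshold \emph{independent} of $p$. On such a range, $g_p(r) \le g_p(\Gamma) = \Gamma^{(d+1)p-1} e^{-\nu p \Gamma/2}$, and the leftover $\int_\Gamma^\infty e^{-\nu p r/2}\, dr = \tfrac{2}{\nu p} e^{-\nu p \Gamma/2}$ is elementary. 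Combining these estimates produces
\[
\frac{2}{\Gamma^p}\int_\Gamma^\infty e^{-\nu p r} r^{(d+1)p-1}\, dr \;\le\; \frac{4}{\nu p\, \Gamma}\,\bigl(\Gamma^d e^{-\nu \Gamma}\bigr)^p.
\]

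To close the argument, I would further impose $\Gamma \ge 4/\nu$ so that the prefactor $\tfrac{4}{\nu p \Gamma} \le 1$ holds for every $p \ge 1$; the right-hand side then reduces to $(\Gamma^d e^{-\nu \Gamma})^p \le \epsilon^p$ by the choice made in the first paragraph. Setting $\Gamma_0 := \max\bigl((d+1)/\nu,\ 2(d+1)/\nu,\ 4/\nu\bigr)$ and enlarging it, if necessary, so that $\Gamma^d e^{-\nu \Gamma} \le \epsilon$ for all $\Gamma \ge \Gamma_0$ yields a single threshold --- manifestly $\ge (d+1)/\nu$ as required --- that works for both quantities and all $p \in [1,\infty)$.
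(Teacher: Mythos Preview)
Your proof is correct. Both your argument and the paper's share the same core idea---split $e^{-\nu p r}$ into two equal exponential factors and use one half to tame the polynomial growth---but the technical execution differs. The paper bounds the polynomial by an exponential via $r \le (\Gamma_0/e)\,e^{r/\Gamma_0}$ for $r\ge\Gamma_0\ge e$, then absorbs $e^{r((d+1)p-1)/\Gamma_0}$ into $e^{-\nu p r/2}$ using $\Gamma_0 \ge 2(d+1)/\nu$. You instead observe directly that $g_p(r)=r^{(d+1)p-1}e^{-\nu p r/2}$ is monotone decreasing past the $p$-independent threshold $2(d+1)/\nu$ and bound it by its value at the left endpoint $\Gamma$. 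Your route is a bit more transparent: it reduces the integral bound exactly to the first quantity $(\Gamma^d e^{-\nu\Gamma})^p$ times a harmless prefactor, so a single choice of $\Gamma_0$ handles both terms at once, whereas the paper's substitution leaves a bound involving both $\Gamma_0$ and $\Gamma$ that requires one more line of unpacking.
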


\begin{proof}
It suffices to establish that
\[
 \frac2{\Gamma^p} \int^\infty_\Gamma \exp(-\nu pr) r^{(d+1)p-1} \text{ d}r \le \epsilon^p
\]
for an appropriate $\Gamma_0$ and $\Gamma\ge \Gamma_0$. To see this, assume $\Gamma_0\ge \max(e, \frac{2(d+1)}\nu)$ which implies
\[
 r \le \frac{\Gamma_0}e e^{r/\Gamma_0}\quad \text{whenever}\quad r\ge \Gamma_0.
\]
Hence
\begin{align*}
 &\frac2{\Gamma^p} \int^\infty_\Gamma \exp(-\nu pr) r^{(d+1)p-1} \text{ d}x\\
\le~ &\frac2{\Gamma^p} \int^\infty_\Gamma \exp(-\nu pr) \left[\frac{\Gamma_0}e e^{r/\Gamma_0}\right]^{(d+1)p-1} \text{ d}r\\
\le~ &\left(\frac{\Gamma_0}e\right)^{dp} \cdot \frac{2e}{\Gamma_0e^p} \int^\infty_\Gamma e^{-\frac{p\nu}2\cdot r} \text{ d}r\\
=~ &\left(\frac{\Gamma_0}e\right)^{dp}\cdot \frac{4e}{\Gamma_0 p\nu e^p} \cdot e^{-p\nu \Gamma/2}.
\end{align*}
The desired conclusion now follows easily.
\end{proof}
%
\subsection{Stability of the Lagrange Basis}
From Propositions \ref{uppercomparison} and \ref{lowercomparison} we now have:
\begin{theorem}
Under Assumptions \ref{LagrangeBasis}, \ref{LagrangeDecay} and \ref{Bernstein},
there exist constants $0<c_1<c_2$, so that
$$
c_1\|A_{p,\cdot}\|_{\ell_p(\Xi)} 
\le 
\|s \|_{L_p(\M)}
\le 
c_2 \|A_{p,\cdot}\|_{\ell_p(\Xi)}.
$$
holds for all $s =\sum_{\xi \in \Xi} A_{\xi} v_{\xi}  \in X$.
\end{theorem}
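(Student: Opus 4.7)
The plan is to present this theorem as an essentially immediate synthesis of Propositions \ref{uppercomparison} and \ref{lowercomparison}, with the Lagrange basis hypothesis (Assumption \ref{LagrangeBasis}) providing the bridge between basis coefficients and point samples. The first step is to observe that, for a Lagrange basis, evaluating $s=\sum_{\xi\in\Xi} A_\xi v_\xi$ at a node $\zeta\in\Xi$ collapses the sum to $s(\zeta)=A_\zeta$, so that $A_\xi=s(\xi)$ and the renormalized coefficients of Remark \ref{renorm} satisfy
$$\|A_{p,\cdot}\|_{\ell_p(\Xi)} = q^{d/p}\|s|_\Xi\|_{\ell_p(\Xi)}.$$
This converts the two-sided inequality in the theorem into precisely the sampling inequality (\ref{sampling}) already isolated earlier.

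Next, I would apply the upper bound $\|s\|_p \le c_2\|A_{p,\cdot}\|_{\ell_p(\Xi)}$, which is Proposition \ref{uppercomparison} verbatim; note that its proof uses only Assumption \ref{LagrangeDecay}, not the Lagrange or H\"older hypotheses. For the lower bound, Proposition \ref{lowercomparison} delivers $c_1\|s|_\Xi\|_{\ell_p(\Xi)} \le q^{-d/p}\|s\|_p$ valid for $q<q_0$; multiplying through by $q^{d/p}$ and substituting the identification $A_\xi=s(\xi)$ yields the claimed lower bound with the same constant $c_1$.

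I do not anticipate any real obstacle, since all the analytic heavy lifting has already been carried out in the two preceding propositions; the theorem is essentially a bookkeeping statement that records the equivalence in the notation of normalized coefficients. The one loose end is the regime $q\ge q_0$, which is not covered by Proposition \ref{lowercomparison}. In this degenerate case, quasi-uniformity combined with the volume bound of the Manifold Properties forces $\#\Xi$ to be bounded in terms of $q_0$, $\rho$, and $\M$, so that $X=\spam_{\xi\in\Xi}v_\xi$ has uniformly bounded dimension; on such a finite-dimensional space all norms are equivalent, and the resulting equivalence constants can be absorbed into the final $c_1$ and $c_2$ without affecting their dependence on the stated parameters.
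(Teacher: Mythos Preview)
Your proposal is correct and matches the paper's approach exactly: the paper presents this theorem with no proof at all beyond the single line ``From Propositions \ref{uppercomparison} and \ref{lowercomparison} we now have,'' and you have simply spelled out the bookkeeping (the identification $A_\xi=s(\xi)$ via Assumption \ref{LagrangeBasis}) that makes this citation work. Your discussion of the $q\ge q_0$ regime goes beyond what the paper addresses, though the finite-dimensional norm-equivalence argument as stated does not by itself guarantee \emph{uniform} constants over all admissible $\Xi$; the paper simply leaves this restriction implicit.
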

It follows that the Lagrange basis considered in \cite{HNW} is stable.
\begin{corollary}
Let $\M$ be a $d$-dimensional, compact, Riemannian manifold, let $m>d/2$ 
and let $\kappa_{m,\M}$ be one of the kernels considered in \cite{HNW}.
If $\Xi\subset \M$ has mesh ratio $\rho$, then the associated Lagrange basis
$(\chi_{\xi})_{\xi\in\Xi}$, $\chi_{\xi} = \sum a_{\zeta} \kappa_{m,\M}(\cdot,\zeta)$, satisfies
(\ref{comparison}) for all $s\in S(\kappa_{m,\M})$, with $c_1,c_2$ depending only
on $\rho$, $m$ and $M$.
\end{corollary}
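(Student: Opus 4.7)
The proof is essentially a verification exercise: we need to confirm that the Lagrange basis $(\chi_\xi)_{\xi\in\Xi}$ associated with $\kappa_{m,\M}$ satisfies Assumptions~\ref{LagrangeBasis}, \ref{LagrangeDecay}, and \ref{Bernstein}, after which the preceding theorem applies directly and yields \eqref{comparison} with condition numbers depending only on $\M$, $\rho$, and the constants from those assumptions (which in turn depend only on $m$, $\M$, and $\rho$).

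First, Assumption~\ref{LagrangeBasis} holds by construction, since $\chi_\xi$ is by definition the Lagrange basis for interpolation from $S(\kappa_{m,\M},\Xi)$ at the nodes $\Xi$. Second, Assumption~\ref{LagrangeDecay} is the content of the localization estimate for Lagrange functions of the kernels $\kappa_{m,\M}$ established in \cite{HNW}; more precisely, \cite[Theorem 4.5]{HNW} (or the analogous result cited there) provides constants $C_1$ and $\nu$, depending only on $m$, $\M$, and $\rho$, for which $|\chi_\xi(x)|\le C_1\exp\bigl(-\nu\min(\d(x,\xi),\inrad)/q\bigr)$. Thus the decay hypothesis is already in the literature and does not require additional work here.

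Third, the Hölder continuity assumption (Assumption~\ref{Bernstein}) is the only verification that is not immediate from \cite{HNW}. For this, I would invoke Lemma~\ref{Bern_lemma} of the appendix, which is stated precisely so as to show that the Lagrange functions $\chi_\xi$ for the kernels $\kappa_{m,\M}$ satisfy the required Hölder estimate $|\chi_\xi(x)-\chi_\xi(y)|\le C_2(\d(x,y)/q)^\epsilon$ with constants depending only on $m$, $\M$, and $\rho$. This is the step that does real work (it is effectively a Bernstein-type inequality for the kernel space), but by design the appendix covers it.

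With all three assumptions in force for the family $(\chi_\xi)_{\xi\in\Xi}$, the theorem above yields constants $c_1,c_2>0$ depending only on the data $m$, $\M$, and $\rho$ for which the two-sided inequality \eqref{comparison} holds for every $s\in S(\kappa_{m,\M},\Xi)$. The only genuine obstacle in this corollary is the Hölder bound, which is handled in the appendix; the rest is a direct citation of \cite{HNW} plus an application of the theorem.
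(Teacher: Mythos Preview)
Your proposal is correct and follows essentially the same approach as the paper: verify Assumption~\ref{LagrangeDecay} by citing \cite[Proposition~4.5]{HNW}, invoke Lemma~\ref{Bern_lemma} from the appendix for the H\"older continuity, and apply the preceding theorem. The paper's proof is terser (it leaves Assumption~\ref{LagrangeBasis} implicit), but the substance is identical.
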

\begin{proof}
The Lagrange functions satisfy Assumption \ref{LagrangeDecay} by \cite[Proposition 4.5]{HNW}.
It remains to show that they are uniformly H\"{o}lder continuous. This is shown in Lemma \ref{Bern_lemma}, which we postpone until the appendix.
\end{proof}
\section{The $L_\infty$-norm of the $L_2$-projector}\label{LeastSquares}

We now discuss the problem of best $L_2$ approximation from finite
subspaces of $\M$, an elementary problem, but one which we consider for
the sake of clarity.
Given a family (indexed over the set $\Ind$, with $\#\Ind<\infty$) 
of continuous functions 
$(v_{\xi})_{\xi\in\Ind}$ on $\M$, we denote  the `synthesis map' by 
 $V \colon \ \comps^{\Ind} \to X := \spam_{\xi\in\Xi} v_{\xi} \subset C(\M)$. 
This is an operator which maps sequences of coefficients to functions in $X$ via
\[
 (a_{\xi})_{\xi\in\Ind}\mapsto \sum_{\xi\in\Ind} a_{\xi} v_{\xi}.
\]
The corresponding `analysis map'
\[
 V^{*}\colon \  L_1(\M) \to \comps^{\Ind}\colon \  s\mapsto \bigl(\langle s,v_{\xi}\rangle\bigr)_{\xi\in \Ind},
\]
is the formal adjoint of the synthesis map. 
Since each $v_{\xi}$ is in $L_1(\M)\cap L_{\infty}(\M)$, 
it follows that 
$V^{*}$ is a bounded operator on each $L_p$ space.

If $V$ is 1--1, it follows that 
the $L_2$ projector on 
$X$ is the operator
$T_{\Ind}  = V(V^* V)^{-1} V^* $, and it, too,
 is bounded on each $L_p(\M), 1\le p\le\infty$. One can write
$T_{\Ind}(f)=\sum_{\xi\in\Ind} c_{\xi} v_{\xi},$ for  $f \in L_p,$
with coefficients $c_{\xi}$ (uniquely) determined
by the Gram matrix $\sfG = V^*V$ and inner products with $f$,
\[
\sfG (c_{\xi})_{\xi\in\Xi} =  V^*f\quad \Leftrightarrow \quad \langle T_{\Ind}(f), v_{\xi} \rangle = \langle f, v_{\xi} \rangle, \quad \xi \in X.
\]
A consequence of this fact is that $(T_{\Xi}f- f) \perp X$ and
$$\|f - T_{\Xi} f\|_2 \le \|f -s\|_2$$ for all $s\in X$.

We make special note of the fact that the $L_2$ projector is independent of the choice of the basis. As such, the formal dependence of $T_{\Xi}$ on $\Xi$ should seem out of place. 
We adopt this notation because when $X= S(\kappa,\Xi)$,
the index set $\Ind \subset \M$ is contained in $\M$, and the spaces (not just the bases) are naturally indexed by $\Xi$. 

The norm of $T_{\Ind}$ on $L_{\infty}$ matches the operator norm on $L_1$, 
and this quantity controls the $L_p$ operator norms for all $1\le p\le\infty.$ Indeed, 
$$\|T_{\Ind}\|_{\infty} = \sup_{\|f\|_{\infty}=1}\sup_{\|g\|_1=1} |\langle T_{\Ind} f, g\rangle| = \sup_{\|f\|_{\infty}=1}\sup_{\|g\|_1=1} |\langle T_{\Ind} g, f\rangle| = \|T_{\Ind}\|_1.$$

We will establish an upper bound for $\|T_{\Ind} \|_{\infty} $ by separately estimating the following three norms:
\[
 \|V\|_{\ell_{\infty}\to L_{\infty}}, \quad \|(V^* V)^{-1}\|_{\ell_{\infty}\to \ell_{\infty}}, \quad \|V^*\|_{L_{\infty}\to \ell_{\infty}}.
\]
The challenging part lies in the process of estimating $\|(V^* V)^{-1}\|_{\ell_{\infty}\to \ell_{\infty}}$.
Under basic assumptions on the basis functions,  we can bound the product of the remaining two operator norms $\|V\|_{\infty}\times\|V^{*}\|_{\infty} $, with little effort.

\subsection{The $\ell_\infty-$boundedness of the inverse of the Gramian}\label{Gram}
Using the basis $(v_{2,\xi})_{\xi \in \Xi}$, renormalized as in Remark \ref{renorm}, the $L_2$ projector $T_\Xi$
has Gram matrix
$\sfG:=(\langle v_{2,\xi}, v_{2,\zeta}\rangle)_{\xi,\zeta}$.
The equation
$$ \frac{\sum_{\xi,\zeta \in \Xi}a_{\xi}\sfG(\xi,\zeta)a_{\zeta}}{\sum_{\xi \in \Xi}|a_{\xi}|^2} = 
\frac{\| \sum_{\xi \in \Xi}a_{\xi}v_{2,\xi}\|_2^2}{\|a\|_{\ell_2(\Xi)}^2}.$$
shows that the {\em Riesz bounds} associated with this basis determine the numerical range of the Gramian. 
By estimate (\ref{comparison}), the eigenvalues of $\sfG$ are bounded below and above by $c_1^2$ and $c_2^2$.

We now investigate $\sfG$ and its inverse. 
For a local basis, as in Assumption \ref{LagrangeDecay}, it is possible to estimate the size of an entry by its distance from the diagonal. 
For $\xi,\zeta\in \Xi$ sufficiently far apart, we write $\Omega_1:=\{x\in\M\mid \d(x,\xi)\le \d(x,\zeta)\}$
and $\Omega_2:=\{x\in\M\mid \d(x,\zeta)\le \d(x,\xi)\}$. Noting that both
$\d(\zeta,\Omega_1)$ and $\d(\xi,\Omega_2)$ are no less than $\d(\xi,\zeta)/2$, 
 (\ref{LagrangeDecay}) implies
\begin{align*}
|\sfG(\xi,\zeta)| &=  \left|\int_{\M} v_{2,\xi}(x)v_{2,\zeta}(x)\dif x \right|
\le
\int_{\Omega_2} |v_{\infty,\xi}(x)| |v_{1,\zeta}(x)|\dif x + \int_{\Omega_1} |v_{\infty,\zeta}(x)| |v_{1,\xi}(x)|\dif x\\
&\le
C_{\sfG} \exp\left[-{\mu}\frac {\min(\d(\xi,\zeta),\injconst)}{q}\right]
\end{align*}
with $\mu := \nu/2$. Here $C_{\sfG}$ is a constant depending only on the manifold $\M$ and the  constants in Assumption \ref{LagrangeDecay}.

To show the $\ell_\infty-$boundedness of the inverse of the Gramian, we adapt an argument developed by Demko, Moss and Smith \cite{DMS}.  Their results primarily apply to bandlimited matrices\footnote{ Demko, Moss and Smith also addressed a few exceptional cases in which the matrices involved are full. However, the discussion there does not seem to imply the situation we are dealing with here.}, and our Gramians are not such matrices. To use their argument, 
we decompose the Gramian as a sum of a bandlimited matrix $\band$ and a negligible matrix $\res$. To be precise, we write
\[
 \sfG = \band+ \res = \band(\mathsf{Id} +\band^{-1}\res),
\]
in which $\band(\xi,\zeta) = 0$ for 
$\d(\zeta,\xi)\ge  \Gamma q$ and $\res(\xi,\zeta) = 0$ 
for
$\d(\zeta,\xi)<  \Gamma q$, where $\Gamma > 1$ 
is a cutoff parameter we need to administer with care in our proof. 
The main challenge in the process of bounding 
$\|\sfG^{-1}\|_{\infty} = \| ( \mathrm{Id} + \band^{-1} \res)^{-1}\band^{-1}\|_{\infty}$ 
is to show that $\|\band^{-1} \res\|_{\infty}<1.$ 
The key idea is that we bound the norm of $\res$ by the integral $\int_{\Gamma}^{\infty} r^d e^{-\mu r} \dif r$, 
which decays exponentially with $\Gamma$ whereas the norm of 
$\band^{-1}$ is comparable to $\Gamma^d$.
%
%
\begin{proposition}\label{GramInverse} Suppose the basis $(v_{\xi})_{\xi\in\Xi}$ is stable and local. I.e., it satisfies 
(\ref{comparison}) and
Assumption \ref{LagrangeDecay}. Then there is a constant $R>0$ so that for all $\Xi$ with
$q<R$, the inverse of Gram matrix 
$\sfG = (\langle v_{2,\xi},v_{2,\zeta}\rangle)_{\xi,\zeta}$ 
has $L_{\infty}$ norm bounded by a constant depending only on $\M$ and
the constants from (\ref{comparison}) and Assumptions \ref{LagrangeDecay}. 
In fact, we have
$$\|\sfG^{-1}\|_{\infty} \le 2 \mathcal{C} \Gamma^d$$
with constants $\mathcal{C}$ and $\Gamma$ (depending only on $\M$, $C_1$, $\nu$ and $c_1$ and $c_2$) as defined in (\ref{Gram_Const}) and (\ref{Gamma_def}) below.
\end{proposition}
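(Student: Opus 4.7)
My plan is to follow the roadmap sketched in the paragraph immediately preceding the statement: write $\sfG = \band + \res$, where $\band(\xi,\zeta)$ is the truncation of $\sfG$ to the band $\d(\xi,\zeta)<\Gamma q$, and factor
\[
 \sfG = \band(\mathsf{Id} + \band^{-1}\res).
\]
The target bound $\|\sfG^{-1}\|_\infty \le 2\mathcal{C}\Gamma^d$ will emerge from (a) $\|\band^{-1}\|_\infty \le \mathcal{C}\Gamma^d$ together with (b) $\|\band^{-1}\res\|_\infty \le \tfrac12$, so that the Neumann series for $(\mathsf{Id}+\band^{-1}\res)^{-1}$ converges with norm at most $2$. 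The parameter $\Gamma$ will be tuned so that the second inequality holds.

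First I would estimate $\|\res\|_\infty$ using the off-diagonal decay $|\sfG(\xi,\zeta)|\le C_\sfG\exp(-\mu\min(\d(\xi,\zeta),\inrad)/q)$ just established. Splitting the sum $\sum_{\d(\xi,\zeta)\ge\Gamma q}|\sfG(\xi,\zeta)|$ into dyadic annuli and using the packing estimate in Assumption~\ref{Man}(2) as in Step~1 of Lemma~\ref{offcenter}, one obtains a bound of the form $\|\res\|_\infty \lesssim \Gamma^d\exp(-\mu\Gamma) + (\mathrm{tail})$, which decays faster than any inverse polynomial in $\Gamma$; in particular, $\|\res\|_2$ also decays, say as $\Gamma^d e^{-\mu\Gamma}$ (after multiplying by the same packing bound $K_\M\Gamma^d$, since $\|\res\|_2\le\|\res\|_\infty^{1/2}\|\res^*\|_\infty^{1/2}$ and $\res$ is symmetric).

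Next I would control $\|\band^{-1}\|_\infty$ by the Demko--Moss--Smith method. Since (\ref{comparison}) forces the spectrum of $\sfG$ into $[c_1^2,c_2^2]$, for $\Gamma$ chosen so that $\|\res\|_2<c_1^2/2$ the spectrum of $\band=\sfG-\res$ lies in $[c_1^2/2, 2c_2^2]$; in particular $\band$ is positive definite with uniformly bounded $\ell_2$ condition number $\kappa_0$. The DMS inequality then yields constants $C_0,\lambda\in(0,1)$ (depending only on $\kappa_0$) with
\[
 |(\band^{-1})(\xi,\zeta)| \le C_0\,\lambda^{\lceil \d(\xi,\zeta)/(\Gamma q)\rceil}.
\]
Summing over $\zeta$, again broken into annuli of width $\Gamma q$ and using Assumption~\ref{Man}(2) to bound each annulus by $K_\M(k+1)^d\Gamma^d$, gives
\[
 \|\band^{-1}\|_\infty \le C_0\sum_{k\ge 0} K_\M(k+1)^d\Gamma^d \lambda^k =: \mathcal{C}\,\Gamma^d,\tag{\ref{Gram_Const}}
\]
with $\mathcal{C}$ depending only on $\kappa_0$, $C_0$, and $K_\M$.

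Finally I would combine the two bounds. Writing $\|\band^{-1}\res\|_\infty\le \|\band^{-1}\|_\infty\|\res\|_\infty\lesssim \Gamma^d\cdot\Gamma^d e^{-\mu\Gamma}$, we can pick $\Gamma$ large enough (depending on $\M$, $C_1$, $\nu$, $c_1$, $c_2$) that simultaneously $\|\res\|_2<c_1^2/2$ and $\|\band^{-1}\res\|_\infty\le 1/2$; this $\Gamma$ is the constant referenced in (\ref{Gamma_def}). Writing $q<R:=\inrad/\Gamma$ ensures that the assumption $q\le\inrad/\Gamma$ used implicitly in the decay estimate is in force. The Neumann series then gives $\|(\mathsf{Id}+\band^{-1}\res)^{-1}\|_\infty\le 2$, and consequently
\[
 \|\sfG^{-1}\|_\infty \le \|(\mathsf{Id}+\band^{-1}\res)^{-1}\|_\infty\,\|\band^{-1}\|_\infty \le 2\mathcal{C}\Gamma^d.
\]
The main obstacle is the middle step: invoking Demko--Moss--Smith rigorously in the manifold setting (where the index set is $\Xi$ with a metric rather than integers), which forces one to verify that the DMS argument depends only on a graph-theoretic ``bandwidth'' that is still controlled by $K_\M\Gamma^d$, and to track how the condition number of $\band$ rather than of $\sfG$ enters the exponential rate $\lambda$.
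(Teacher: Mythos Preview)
Your proposal is essentially the paper's proof: the same $\band+\res$ decomposition, the same annulus estimates for $\|\res\|_\infty$ (and hence $\|\res\|_2$), the same spectral localization of $\band$ via $\sigma(\sfG)\subset[c_1^2,c_2^2]$, and the same Neumann-series conclusion. The only substantive difference is that where you invoke Demko--Moss--Smith as a black box and then worry about its applicability to metric-indexed matrices, the paper simply carries out the DMS argument in place: it takes the best degree-$n$ polynomial approximation $p_n$ to $1/x$ on $[\tfrac12 c_1^2, 2c_2^2]$ with uniform error $C_0 Q^{n+1}$, observes that $\band^j(\xi,\zeta)=0$ whenever $\d(\xi,\zeta)\ge j\Gamma q$ (this is the only ``bandwidth'' fact needed, and it follows directly from the triangle inequality in the metric on $\M$), and concludes $|\band^{-1}(\xi,\zeta)|=|\band^{-1}(\xi,\zeta)-p_n(\band)(\xi,\zeta)|\le \|\band^{-1}-p_n(\band)\|_2\le C_0 Q^{n+1}$ for the largest admissible $n$. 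So your flagged obstacle dissolves once you reproduce this two-line argument rather than cite it. One small point: your choice $R=\inrad/\Gamma$ is not quite enough; the paper also shrinks $q$ further to kill the far tail $(\mathrm{diam}(\M)/q)^d\exp(-\mu\inrad/q)$ coming from pairs with $\d(\xi,\zeta)>\inrad$, which your ``$+(\text{tail})$'' hides.
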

%
\begin{proof}
By picking $\Gamma$ sufficiently large and $q$ sufficiently small, 
$\|\res\|_{\infty} \le  \sup_{\zeta} \sum_{\d(\xi,\zeta)\ge \Gamma q} |\sfG(\xi,\zeta)|$
can be made as small as desired. Indeed, we have
\begin{equation*}
 \|\res\|_{\infty}\le \sup_{\zeta} \sum_{\d(\xi,\zeta)\ge \Gamma q} |\sfG(\xi,\zeta)|
\le \sup_{\zeta} \sum_{k=1}^{\infty} \sum_{\xi\in H_k(\zeta)} |\sfG(\xi,\zeta)|
\end{equation*}
We have defined 
$H_k(\zeta) := \{\xi\in\Xi\colon\  k \Gamma q \le d(\xi,\zeta) \le (k+1) \Gamma q\}$
to be the set of centers in an annulus around $\zeta$ of radius $k\Gamma q$.
For small $k,\xi\in H_k(\zeta)$, ensures that $|\sfG(\xi,\zeta)|\le C_\sfG \sum_{\xi\in H_k(\zeta)} \exp(-\mu k\Gamma)$, while for large $k$, (say $k\ge k^*$) $\xi\in H_k(\zeta)$ implies $|\sfG(\xi,\zeta)|\le C_\sfG \sum_{\xi\in H_k(\zeta)} \exp(-\mu \inrad/q)$.
By symmetry, the same inequality holds for $\|\res\|_1$.
Since the cardinality of each set $H_k(\zeta)$ is less than $K_{\M} (k+1)^d \Gamma^d$, while the cardinality of $\bigcup_{k\ge k*} H_k(\zeta)\subset \Xi$ is less than $\left(\frac{\mathrm{diam}(\M)}q\right)^d$, we have
\begin{align}\label{Gamma_def}
\max(\|\res\|_{1},\|\res\|_{\infty})
&\le 
C_{\sfG}K_{\M}\left( \Gamma^d  \sum_{k=1}^{\infty}  (k+1)^d \exp(-\mu k\Gamma) 
+ \left(\frac{\mathrm{diam}(\M)}{ q}\right)^{d} \exp( -\mu \inrad/q)\right)\nonumber\\
&\le 
\mathcal{K}\left(\Gamma^d \exp(-\mu \Gamma) 
+ \frac{1}{\Gamma}\int_{\Gamma}^{\infty}  
  r^{d}e^{-\mu r}
\dif r +
\left(\frac{\mathrm{diam}(\M)}{ q}\right)^{d} \exp( -\mu \inrad/q)\right).
\end{align}
By choosing $\Gamma$ sufficiently large and then $q$ sufficiently small, 
we can impose the condition that the last expression is
less than
$$ \frac12 \min(c_1^2,\mathcal{C}^{-1} \Gamma^{-d}),$$
where  
$\mathcal{C}$ is defined in Equation \eqref{Gram_Const}.

It follows that $\|\res\|_p$ is less than 
$\frac12 c_1^2$ for every $1\le p\le \infty$. 
In particular, this holds when $p=2$,
which implies that $\sigma(\band)$, the spectrum of $\band$, is contained in the interval $[\tfrac12 c_1^2, \tfrac32 c_2^2] \subset [\tfrac12 c_1^2,2c_2^2].$

We are now ready to  use Demko, Moss and Smith's argument.
It is possible to approximate the univariate function $\frac{1}{x}$ on 
$[\tfrac12 c_1^2,2c_2^2]$ using $p_n$, an algebraic polynomial of degree $n$
with uniform error 
$$\max_{x\in[\tfrac12 c_1^2,2c_2^2]}\left|\frac{1}{x} - p_n(x)\right|
\le C_0 Q^{n+1}$$
where
\begin{equation} \label{c-zero}
Q:=\frac{2c_2-c_1}{2c_2+c_1} <1, 
\quad 
\text{and} 
\quad 
C_0:=\frac{(c_1+c_2)^2}{2c_1^2c_2^2}\le \frac{3}{2}c_1^{-2}
\end{equation}

We remark that $\band$ is bandlimited, and that
$\band^j (\xi,\zeta) =0$ for $\d(\xi,\zeta)\ge j\Gamma q$. 
It follows that for $\xi, \zeta$ with $n\Gamma q\le \d(\zeta,\xi)$, 
we have  $p_n(\band)(\xi,\zeta)=0$. 
Thus we can use the spectral theorem for positive operators to write
$$
|\band^{-1}(\xi,\zeta)| 
= 
|\band^{-1}(\xi,\zeta) - p_n(\band)(\xi,\zeta)|
\le 
\|\band^{-1}- p_n(\band)\|_2 
=
\max_{x\in \sigma(\band)}\left|\frac{1}{x} - p_n(x)\right|
\le 
C_0 Q^{n+1}.
$$
By setting 
$$\tau:= - \log Q = - \log \frac{2c_2-c_1}{2c_2+c_1},$$
we derive the following estimate on $|\band^{-1}(\xi,\zeta)|$:
$$
|\band^{-1}(\xi,\zeta)|
\le  
\frac{3}{2}{c_1}^{-2} \exp\left[- \tau \frac{d(\xi,\zeta)}{\Gamma q}\right].
$$
We note that $\tau$ depends only on $c_1$ and $c_2$ 
and not at all on $\Gamma$.

As we did for $\res$, we estimate the
$L_{\infty}$ operator norm of $\band^{-1}$ to obtain
$$
\|\band^{-1}\|_{\infty}
\le
\frac{3}{2}{c_1}^{-2}
  \sum_{k=0}^{\infty}
     K_{\M} (k+1)^{d}e^{-\tau k/\Gamma}
\le  \mathcal{C} \Gamma^d
$$
where 
\begin{equation}\label{Gram_Const}
\mathcal{C} = 3{c_1}^{-2}K_{\M}\tau^{-d}\int_{0}^{\infty} r^d e^{-r}\dif r.
\end{equation}

To complete the proof, we note that
$\|\band^{-1}\res\|_{\infty} \le \mathcal{C}\Gamma^d \times \frac12 \Gamma^{-d}\mathcal{C}^{-1}\le\frac{1}{2}$,
and consequently that $\|\sfG^{-1}\|_{\infty}\le  2 \mathcal{C} \Gamma^d$.
\end{proof}
\begin{remark}
In \cite{GroLei,SchSiva} it is shown that invertible matrices that decay exponentially away from the diagonal
have inverses with the same property. However, it is not clear that their results are applicable to this situation. In particular,  it is unclear how to control the ensuing constants in order to bound the norms of these inverses.
\end{remark}
%
\section{Main Results}\label{Main}
In this section, we show that for a space $X$ with a local, stable basis, 
 the $L_2$ projector is uniformly bounded in each $L_p$ norm.
\begin{theorem}\label{main}
Let $\Xi\subset \M$ be a quasiuniform set. 
Assume that (\ref{comparison}) and Assumption \ref{LagrangeDecay} hold true
for the basis $(v_{\xi})_{\xi\in\Xi}$ of $X$. 
Then 
for all
$1\le p\le \infty$, the $L_p$ operator norm of the $L_2$  projector 
$T_{\Xi} = V (V^* V)^{-1} V^*\colon \  L_1(\M) \to X$ is
bounded by a constant depending only on $\M$, $\rho$ and the constants $\nu$, $C_1$, $c_1$ and $c_2$ that appear in (\ref{comparison}) and Assumption \ref{LagrangeDecay}.
\end{theorem}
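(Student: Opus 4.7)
The plan is to use the factorization $T_{\Xi} = V (V^*V)^{-1} V^*$ already introduced, working throughout with the $L_2$-renormalized basis $(v_{2,\xi})_{\xi\in\Xi}$ so that $V^*V$ coincides with the Gram matrix $\sfG$ analyzed in Section~\ref{Gram}. It suffices to bound $\|T_\Xi\|_{L_\infty\to L_\infty}$: since $T_\Xi$ is an orthogonal projector we have $\|T_\Xi\|_{L_2\to L_2}\le 1$; the duality computation already recorded in the section gives $\|T_\Xi\|_{L_1\to L_1}=\|T_\Xi\|_{L_\infty\to L_\infty}$; and Riesz--Thorin interpolation between $p=1$ and $p=\infty$ then delivers a uniform bound on $\|T_\Xi\|_{L_p\to L_p}$ for every $1\le p\le\infty$.

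To estimate $\|T_\Xi\|_{L_\infty\to L_\infty}$ I split it as
\[
\|T_\Xi\|_{L_\infty\to L_\infty}\ \le\ \|V\|_{\ell_\infty\to L_\infty}\cdot \|(V^*V)^{-1}\|_{\ell_\infty\to \ell_\infty}\cdot \|V^*\|_{L_\infty\to \ell_\infty}
\]
and handle each factor in turn. The middle factor is exactly what Proposition~\ref{GramInverse} supplies, giving $\|(V^*V)^{-1}\|_{\ell_\infty\to\ell_\infty}\le 2\mathcal{C}\Gamma^d$ with constants depending only on the stated data (and provided $q$ is below the threshold $R$ from that proposition). For the synthesis map, the upper half of the stability bound~(\ref{comparison}) in the case $p=\infty$ yields
\[
\|Va\|_\infty=\Bigl\|\sum_{\xi\in\Xi} a_\xi v_{2,\xi}\Bigr\|_\infty \le c_2\, q^{-d/2}\,\|a\|_{\ell_\infty}.
\]
For the analysis map, $(V^*f)_\xi=\langle f,v_{2,\xi}\rangle$, so $\|V^*\|_{L_\infty\to\ell_\infty}=\sup_\xi \|v_{2,\xi}\|_1$. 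Assumption~\ref{LagrangeDecay} together with Manifold Property~1 (volume growth) and the finiteness of $\mathrm{vol}(\M)$ gives $\|v_\xi\|_1\lesssim q^d$ by the same dyadic-annulus summation used in the $p=1$ case of Proposition~\ref{uppercomparison}, whence $\|v_{2,\xi}\|_1\lesssim q^{d/2}$. Multiplying the three estimates, the factors $q^{-d/2}$ and $q^{d/2}$ cancel exactly, leaving a bound on $\|T_\Xi\|_\infty$ depending only on $\M$, $\rho$, $C_1$, $\nu$, $c_1$ and $c_2$, as required.

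The only serious analytic input is Proposition~\ref{GramInverse}; the rest of the argument is assembly and bookkeeping, so I do not anticipate a real obstacle. The one item to double-check is that every $q$-power tracks correctly: the normalization $v_{2,\xi}=q^{-d/2}v_\xi$ was chosen precisely so that the Gramian $\sfG=V^*V$ has spectrum trapped in $[c_1^2,c_2^2]$ independent of $\#\Xi$, and the cancellation above is the dimensional manifestation of that choice. If desired, one can alternatively argue interpolation directly, bounding $\|T_\Xi\|_1$ by the same product using $\|V\|_{\ell_1\to L_1}\lesssim q^{d/2}$ and $\|V^*\|_{L_1\to\ell_1}\lesssim q^{-d/2}$, but since $\|T_\Xi\|_1=\|T_\Xi\|_\infty$ by duality this gives nothing new.
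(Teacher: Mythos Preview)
Your proposal is correct and follows essentially the same route as the paper's proof: factor $T_\Xi=V(V^*V)^{-1}V^*$, invoke Proposition~\ref{GramInverse} for the middle factor, bound $\|V\|_{\ell_\infty\to L_\infty}\le c_2 q^{-d/2}$ and $\|V^*\|_{L_\infty\to\ell_\infty}\le c_2 q^{d/2}$ (the paper phrases both of these as consequences of Proposition~\ref{uppercomparison}, whereas you cite the comparison inequality and Assumption~\ref{LagrangeDecay} directly, which amounts to the same thing), observe the $q$-powers cancel, and then use duality plus Riesz--Thorin.
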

\begin{proof}
We note that Proposition \ref{uppercomparison} provides upper bounds for the norms $\|V\|_{\infty}$ and $\|V^*\|_{\infty}$, with which we can easily write down
 \begin{align*}
\|V\|_{\infty} & = \max_x \sum_{\xi\in\Xi} |v_{2,\xi}(x)| = q^{-d/2} \max_x \sum_{\xi\in \Xi} |v_{\infty,\xi}(x)|\le q^{-d/2}c_2,\\
\|V^*\|_{\infty}& = \max_{\xi} \int |v_{2,\xi}(x)|\dif x \le q^{d/2} \max_{\xi} \int |v_{1,\xi}(x)|\dif x\le q^{d/2} c_2.
 \end{align*}
It follows that the product $\|V\|_{\infty}$ and $\|V^*\|_{\infty}$ is bounded
by $c_2^2$.
In Proposition \ref{GramInverse}, we have shown that $\|(V^* V)^{-1}\|_{\infty}$ is bounded. Together, these imply that 
$$\|T_{\Xi}\|_{1}=\|T_{\Xi}\|_{\infty} \le \|V\|_{\infty}\|V^*\|_{\infty}\|(V^* V)^{-1}\|_{\infty}$$ 
Thus by Riesz--Thorin interpolation, the $L_p$ operator norm ($1 \le p \le \infty$) satisfies the same bound.
\end{proof}
An immediate corollary is that for quasi-uniform sets $\Xi$, the $L_2$ projector
$T_{\Xi}$ gives rise to universal near-best approximations.
\begin{corollary}\label{nearbest}
 Under the conditions of the previous theorem, for
$f\in L_p$
$$\| f - T_{\Xi} f\|_p \le (1+ \mathcal{K}) \mathrm{dist}_p\bigl(f, S(\kappa,\Xi)\bigr),$$
where $\mathcal{K}  \le \max\limits_p\|T_\Xi\|_p < \infty$.
\end{corollary}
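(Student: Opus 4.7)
The plan is to apply the standard ``Lebesgue-type'' argument that turns a uniform bound on the norm of a projector into a near-best approximation estimate. The only substantive ingredient beyond Theorem \ref{main} is the idempotence of $T_\Xi$, which is immediate from its definition as the $L_2$ projector onto $X = S(\kappa,\Xi)$.

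First I would fix $f \in L_p(\M)$ and pick an arbitrary $s \in S(\kappa,\Xi)$. Since $T_\Xi s = s$ (the projector fixes its range), I can write
\[
 f - T_\Xi f \;=\; (f - s) \;-\; T_\Xi(f - s) \;=\; (I - T_\Xi)(f-s).
\]
Taking $L_p$ norms and using the triangle inequality gives
\[
 \|f - T_\Xi f\|_p \;\le\; \bigl(1 + \|T_\Xi\|_{L_p\to L_p}\bigr)\,\|f - s\|_p.
\]
Next I would invoke Theorem \ref{main}, which under the stated hypotheses yields a finite constant $\mathcal{K} := \sup_{1\le p\le\infty}\|T_\Xi\|_{L_p\to L_p}$ depending only on $\M$, $\rho$, and the constants in (\ref{comparison}) and Assumption \ref{LagrangeDecay}, but crucially independent of both $p$ and $\#\Xi$. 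Consequently,
\[
 \|f - T_\Xi f\|_p \;\le\; (1 + \mathcal{K})\,\|f - s\|_p.
\]
Finally, I would take the infimum over $s \in S(\kappa,\Xi)$ on the right-hand side to obtain
\[
 \|f - T_\Xi f\|_p \;\le\; (1+\mathcal{K})\,\mathrm{dist}_p\bigl(f, S(\kappa,\Xi)\bigr),
\]
which is the claim.

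There is no real obstacle here; this is a three-line deduction once Theorem \ref{main} is in hand. The only pedagogical point worth underlining is that although $T_\Xi$ is defined variationally as the $L_2$-best approximation, its $L_p$-boundedness (and the fact that it remains a projector onto $X$) is what converts $L_2$-optimality into the universal $L_p$ near-best approximation property asserted by the corollary.
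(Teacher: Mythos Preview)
Your argument is correct and is precisely the standard Lebesgue-type deduction the paper has in mind; indeed the paper states the inequality $\|f - T_\Xi f\|_p \le (1 + \|T_\Xi\|_p)\,\d_p(f,S(\kappa,\Xi))$ already in the introduction as an immediate consequence of $T_\Xi$ being a projector, and offers no separate proof for the corollary.
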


A key motivation for studying stability of the Lagrange
basis was the observation in \cite{HNW} that the Lagrange
functions associated with the Mat\'{e}rn type kernels 
$\kappa_{m,\M}$ satisfied Assumption \ref{LagrangeDecay}.
In the previous section, it was observed that they satisfy the 
stability relation (\ref{comparison})
as well. 
It follows that Theorem \ref{main} holds in the
cases considered in \cite{HNW}. 
\begin{corollary}
Let $\M$ be a $d$-dimensional, compact, Riemannian manifold, let $m>d/2$ 
and let $\kappa_{m,\M}$ be the kernels considered in \cite{HNW}.
If $\Xi\subset \M$ has mesh ratio $\rho$, then the $L_2$ projector is bounded
on $L_p, 1\le p\le \infty$ by a constant depending only on $\rho, \M$ and $m$,
and, therefore delivers near-best approximation in each $L_p$.
\end{corollary}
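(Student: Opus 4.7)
The plan is simply to verify that the hypotheses of Theorem \ref{main} and Corollary \ref{nearbest} hold for the kernel spaces $S(\kappa_{m,\M},\Xi)$, and then cite those results.

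First, I would take the basis $(v_\xi)_{\xi\in\Xi}$ to be the Lagrange basis $(\chi_\xi)_{\xi\in\Xi}$ of $S(\kappa_{m,\M},\Xi)$ used in \cite{HNW}. The localization Assumption \ref{LagrangeDecay} for this basis is exactly the content of \cite[Proposition 4.5]{HNW}, so this hypothesis comes for free. Thus, $(v_\xi)_{\xi\in\Xi}$ meets the localization hypothesis with constants $C_1,\nu$ depending only on $m$, $\M$, and the mesh ratio $\rho$.

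Second, I would invoke the corollary to the stability theorem proved at the end of Section \ref{Cond}: under Assumptions \ref{LagrangeBasis}, \ref{LagrangeDecay}, and \ref{Bernstein}, the Lagrange basis satisfies the norm equivalence (\ref{comparison}) with constants $c_1,c_2$ depending only on $\rho$, $m$, and $\M$. Assumption \ref{LagrangeBasis} holds by construction; Assumption \ref{LagrangeDecay} has just been verified; and Assumption \ref{Bernstein} (uniform H\"older continuity) holds for $\kappa_{m,\M}$ by Lemma \ref{Bern_lemma} in the appendix. Hence (\ref{comparison}) is in force for this basis with the stated constant dependence.

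With both (\ref{comparison}) and Assumption \ref{LagrangeDecay} established, Theorem \ref{main} applies directly to $X = S(\kappa_{m,\M},\Xi)$: the $L_2$ projector $T_\Xi$ is bounded on $L_p(\M)$ for every $1\le p\le\infty$ by a constant that depends only on $\M$, $\rho$, $\nu$, $C_1$, $c_1$, $c_2$, each of which in turn depends only on $\rho$, $m$, and $\M$. Finally, Corollary \ref{nearbest} converts this uniform operator bound into the near-best approximation statement $\|f - T_\Xi f\|_p \le (1+\mathcal{K})\mathrm{dist}_p(f,S(\kappa_{m,\M},\Xi))$, completing the proof. There is no real obstacle here: all the heavy lifting has been done in Theorem \ref{main}, the stability corollary of Section \ref{Cond}, and the decay/H\"older estimates cited from \cite{HNW} and the appendix.
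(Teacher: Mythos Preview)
Your proposal is correct and matches the paper's own argument essentially verbatim: the paper simply observes (in the paragraph preceding the corollary) that the Lagrange functions for $\kappa_{m,\M}$ satisfy Assumption~\ref{LagrangeDecay} by \cite{HNW} and the stability relation (\ref{comparison}) by the corollary at the end of Section~\ref{Cond}, so Theorem~\ref{main} and Corollary~\ref{nearbest} apply. There is nothing to add or change.
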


\subsection{Example: $L_2$ projection on $\sph^2$}\label{Example}
We now provide the example of the kernel $\kappa_{2,\sph^2}$ that 
satisfies our two main assumptions,
and which, by Theorem \ref{Main},
gives rise to uniformly bounded $L_2$ projectors for quasi-uniform centers.
Furthermore, the $L_p$ approximation orders for this kernel can be given exactly,
 which furnishes the operators $T_{\Xi}$ with a precise error analysis.
 
These facts follow from an analysis carried out in \cite{HNW}, where $\kappa_{2,\sph^2}$ was observed to satisfy Assumption \ref{LagrangeDecay},
a fact which was crucial to yielding bounded Lebesgue constants. A complementary fact, observed in \cite[Section 5]{HNW},
is that $\kappa_{2,\sph^2}$ belongs to the class of SBFs studied in \cite{MNPW}. A consequence of this, is that the kernel networks have universal convergence rates, a fact we consider below.

{\bf Setup:} Let $L_d := \sqrt{\lambda^2_d - \Delta_{\sphere}}$ be the
pseudo-differential operator where $\Delta_{\sphere}$ is the
Laplace--Beltrami operator on $\sphere$ and $\lambda_d = \frac{d-1}2$.
The Bessel potential Sobolev spaces have the norm given by
\[
\|f\|_{H^p_\gamma} := \left\|\sum^\infty_{\ell=0}
  (\ell+\lambda_d)^\gamma P_\ell f\right\|_{L^p(\sphere)}.
\]
We note that when $\gamma\in 2\ints$, $L^\gamma_d$ is a {\em differential}
operator and  $W_p^{\gamma}$, the usual Sobolev space (imported
from $\reals^d$ via a partition of unity and local diffeomorphisms,
or obtained using covariant derivatives) is embedded in $H^p_{\gamma}$  
(this does not hold for other values of $\gamma$ when $p=1,\infty$).

Finally for $\beta=1,2,\ldots$, let $G_\beta$ be the Green's function
for $L^\beta_d$, i.e.,
\[
 L^\beta_d G_\beta(x\cdot y) = \delta_y(x).
\]
More generally let $\phi_\beta := G_\beta + G_\beta *\psi$ where
$\psi$ is an $L^1$ zonal function and let 
$S_{\Xi} :=
\spam \{\phi_\beta(x\cdot \xi)\colon \ \xi\in \Xi\}$ where $\Xi$ has
uniformity measure $ h/q \le c_0$.

{\bf Convergence rates:} The following was given in \cite[Theorem 6.8]{MNPW}
\begin{theorem} \label{rates_full}
Let $1\le p \le \infty,\enskip
\beta> d/p' \quad$ $(1/p + 1/p'=1)$. If $f\in H^p_\beta$, then
with $\phi_\beta$ and $S_{\Xi}$ as given above,
\[
\mathrm{dist}_{p}(f,S_{\Xi}) \le 
C\rho^d h^{\beta}\|f\|_{H^p_\beta}.
\]
\end{theorem}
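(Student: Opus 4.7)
The plan is to follow the ``bandlimited approximation plus discretized convolution'' strategy of Mhaskar, Narcowich, Prestin and Ward. Since $L_d$ acts diagonally on spherical harmonics and $H^p_\beta$ is a Bessel potential space, a Jackson-type theorem on $\mathbb{S}^d$---obtained, for example, via de la Vall\'ee Poussin summation of the spherical harmonic expansion of $f$---yields a polynomial $p_N$ of degree $N \asymp 1/h$ satisfying
\[
 \|f - p_N\|_{L^p(\mathbb{S}^d)} \le C h^\beta \|f\|_{H^p_\beta}.
\]
This reduces the problem to approximating $p_N$ well by elements of $S_\Xi$.

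Next, I would realize $p_N$ as a spherical convolution against $\phi_\beta$ and then discretize it on $\Xi$. Because $L_d^\beta G_\beta = \delta$ and $\phi_\beta = G_\beta + G_\beta * \psi$, there is a bandlimited $g_N$ of comparable degree with $p_N = \phi_\beta * g_N$ and $\|g_N\|_{L^p} \le C \|f\|_{H^p_\beta}$. Now invoke a positive-weight cubature formula $(w_\xi)_{\xi \in \Xi}$ that is exact on polynomials of degree $\le 2N$ and whose weights satisfy $w_\xi \asymp h^d$; such formulas exist whenever $\Xi$ is quasi-uniform, and tracking the dependence of their constants on $\rho = h/q$ is what eventually produces the factor $\rho^d$ in the bound. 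The resulting candidate approximant is
\[
 \tilde{p}_N(x) := \sum_{\xi \in \Xi} w_\xi\, g_N(\xi)\, \phi_\beta(x\cdot\xi) \in S_\Xi.
\]

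The final step is to bound $\|p_N - \tilde{p}_N\|_{L^p}$, which is the $L^p$ norm (in $x$) of the cubature error applied to the integrand $y \mapsto \phi_\beta(x\cdot y)\, g_N(y)$. The standard device is to split $\phi_\beta = \phi_\beta^{\mathrm{bl}} + \phi_\beta^{\mathrm{res}}$ into a bandlimited head (which the cubature integrates exactly against $g_N$) and a residual tail, and then control the residual via Young's inequality for spherical convolution. The hypothesis $\beta > d/p'$ enters precisely here: it guarantees that $G_\beta(x\cdot y) \sim |x - y|^{-(d-\beta)}$ is $L^p$-integrable on $\mathbb{S}^d$, so the tail convolution is bounded on $L^p$. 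The main obstacle I expect is making this last step work with constants uniform in $p \in [1,\infty]$; this requires a duality-symmetric treatment of the kernel's singularity together with careful bookkeeping of how the quadrature constants scale with $\rho$, so that the same estimate handles $p=1$ and $p=\infty$ and interpolates cleanly in between.
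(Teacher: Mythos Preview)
The paper does not prove this theorem at all: it is quoted verbatim as \cite[Theorem 6.8]{MNPW} and used as a black box to feed into Corollary~\ref{nearbest}. So there is no ``paper's own proof'' to compare against.

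That said, your sketch is essentially the argument of \cite{MNPW}, and it is correct in outline. A few remarks on the details. First, your claim that $p_N = \phi_\beta * g_N$ with $\|g_N\|_{L^p} \le C\|f\|_{H^p_\beta}$ needs the perturbation $\psi$ to be handled: since $\widehat{\phi_\beta}(\ell) = \widehat{G_\beta}(\ell)(1+\widehat\psi(\ell))$, you must ensure $1+\widehat\psi(\ell)$ is bounded away from zero on the relevant frequency band so that the multiplier $\widehat{\phi_\beta}(\ell)^{-1}$ is comparable to $(\ell+\lambda_d)^\beta$ there. Second, the cubature-error step as you describe it is slightly off: the candidate $\tilde p_N$ lies in $S_\Xi$ only if the side condition (annihilation of low-degree polynomials by the coefficients) is satisfied, which in the conditionally positive definite case requires either building the polynomial correction into the quadrature or restricting to strictly positive definite $\phi_\beta$; in MNPW this is handled, but you should flag it. Third, the integrability of $G_\beta$ that you invoke requires $\beta > d$ to put $G_\beta \in L^\infty$ locally; the weaker hypothesis $\beta > d/p'$ gives only $G_\beta \in L^p$, which is exactly what Young's inequality needs for the $L^p \to L^p$ bound, so your statement is right but the asymptotic $|x-y|^{-(d-\beta)}$ should read $|x-y|^{\beta-d}$ (a singularity when $\beta < d$).
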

In particular, we have for $\beta \in 2\ints$ that
\[
\mathrm{dist}_{p}(f,S_{\Xi}) \le 
C\rho^d h^{\beta}\|f\|_{W^\beta_p}.
\]
by the embedding stated above.

One can also treat functions of lower smoothness using a standard 
$K$-functional approach. 
To this end, we assume $\beta \in 2\ints$, ensuring that 
$W_p^\beta \subset H_\beta^p$.
For $0<s\le \beta$ and $1\le p \le \infty$, we define the Besov space $B_{p,\infty}^s$ as the collection of functions in $L_p$  for which the following expression 
$$\|f\|_{B_{p,\infty}^s}:=\sup_{t>0} t^{-s} K(f,t) $$
is finite, where the $K$-functional $K(f,\cdot)\colon \ (0,\infty)\to (0,\infty)$ is defined as
$$K(f,t) := \inf\left\{\|f - g\|_{L_p} + t^{\beta}\|g\|_{W_p^\beta}\colon \  g\in W_p^\beta \right\}.$$
For a complete discussion of Besov spaces, we recommend \cite[1.11, and Chapter 7]{Trieb} and the references therein.
An immediate consequence of the definition is that, for an arbitrary function in $B_{p,\infty}^s$, and ${t}>0$, there is $g_{t}\in W_p^\beta$ simultaneously satisfying
$$
\|g_{t}\|_{W_p^\beta} \le 2 {t}^{s-\beta}\|f\|_{B_{p,\infty}^s}
\quad \text{and} \quad
\|f-g_{t}\|_{L_p} \le 2 {t}^s \|f\|_{B_{p,\infty}^s}.
$$
In the setting of the previous theorem, one can approximate $f$ by 
the kernel approximant to $g_{t}$.
\begin{corollary}\label{rates_lower}
 Let $1\le p \le \infty,\enskip
\beta> d/p',\enskip 0<s<\beta, \quad$ $(1/p + 1/p'=1)$. If $f\in B_{p,\infty}^s$, then
with $\phi_\beta$ and $S_{\Xi}$ as given above,
\[
\mathrm{dist}_{p}(f,S_{\Xi}) \le C (1+\rho^d) h^{s}
\|f\|_{B_{p,\infty}^s}.
\]
\end{corollary}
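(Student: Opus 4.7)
The plan is to exploit the K-functional decomposition coupled with Theorem \ref{rates_full}, so that a function of low Besov smoothness is split into a smooth piece we can approximate well by a kernel network and a rough remainder whose $L_p$ norm is already small. This is the standard Jackson-type trick, and the only delicate point is balancing the two contributions via the right choice of the parameter $t$.

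Concretely, I would proceed as follows. Fix $f \in B_{p,\infty}^s$ and a scale $t>0$ to be chosen. By the defining property of the K-functional recalled just before the corollary, there exists $g_t \in W_p^\beta$ with
\[
\|f-g_t\|_{L_p} \le 2 t^s \|f\|_{B_{p,\infty}^s}\quad\text{and}\quad \|g_t\|_{W_p^\beta} \le 2 t^{s-\beta}\|f\|_{B_{p,\infty}^s}.
\]
Since $\beta\in 2\ints$, the embedding $W_p^\beta \subset H_\beta^p$ lets us apply Theorem \ref{rates_full} to $g_t$ in place of $f$: there exists $s_t \in S_\Xi$ with
\[
\|g_t - s_t\|_{L_p} \le C \rho^d h^\beta \|g_t\|_{W_p^\beta} \le 2 C \rho^d h^\beta t^{s-\beta}\|f\|_{B_{p,\infty}^s}.
\]

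Combining these via the triangle inequality gives
\[
\mathrm{dist}_p(f, S_\Xi) \le \|f-s_t\|_{L_p} \le \bigl(2 t^s + 2 C \rho^d h^\beta t^{s-\beta}\bigr) \|f\|_{B_{p,\infty}^s}.
\]
Choosing $t=h$ balances the two terms (both become of order $h^s$) and yields
\[
\mathrm{dist}_p(f, S_\Xi) \le 2(1+ C\rho^d) h^s \|f\|_{B_{p,\infty}^s},
\]
which is the claimed estimate.

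There is no serious obstacle here; the only thing to double-check is that the hypothesis $\beta > d/p'$ of Theorem \ref{rates_full} carries over verbatim (it does, because we invoke it on $g_t$, which lies in $H_\beta^p$), and that the choice $t=h$ is admissible, which it is since the K-functional property is valid for every positive $t$. The resulting constant depends only on $C$ from Theorem \ref{rates_full} and on $\rho$ through the factor $\rho^d$, matching the statement.
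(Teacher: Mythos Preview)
Your proof is correct and follows exactly the approach the paper indicates (``one can approximate $f$ by the kernel approximant to $g_t$''): use the $K$-functional to split $f$ into $g_t\in W_p^\beta$ plus a small remainder, apply Theorem~\ref{rates_full} (via the embedding $W_p^\beta\subset H_\beta^p$) to $g_t$, and set $t=h$. The paper leaves the details implicit, and your write-up fills them in precisely as intended.
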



{\em Example} (The case $m=2$, $\M =\sph^2$.)
In \cite[Section 5]{HNW}, the example $\kappa_{2,\sph^2}$ is considered. We note that
$2 > 1.5 = d/2 +.5$, so by Lemma \ref{Bern_lemma}, its shifts satisfy Assumption \ref{Bernstein}
with $\epsilon = .5$.

By applying a simple symmetry argument, the kernel is shown to be 
 a function of the  the collatitude $\theta \in [0,\pi]$,
$$\kappa_{2,\sph^2}(x,y) = \phi\bigl(\theta \bigr),\quad \text{where} \quad \theta := \cos^{-1}\langle x,y\rangle.$$
I.e., it is {\em zonal}. 
Further analysis shows that it satisfies the fourth order differential equation
$$
\frac1{\sin\theta} (\sin\theta \phi'')'' - 
\frac1{\sin\theta} (\cos \theta\cot \theta \phi')' - \frac1{\sin\theta} (\sin\theta \phi')'+ \phi = 0.  \nonumber
$$
By considering a power series solution of this ODE, selecting the sole solution with
appropriate singularity near $\theta=0$, it is observed that
$\phi = G_{\beta} + G_{\beta}*\psi$ with $\beta=4$.
Consequently, for quasiuniform centers the Lagrange functions associated with this kernel provide a stable basis,  interpolation is stable in $L_{\infty}$,
and $L_2$ minimization is stable in $L_p$ for every $1\le p\le \infty$. Moreover, 
the abstract convergence rates from Theorem \ref{rates_full} and Corollary \ref{rates_lower}, combined with Corollary \ref{nearbest} imply the following:
\begin{theorem}
Let $1\le p \le \infty$ and suppose $\Xi\subset \sph^2$ is quasiuniform, 
with mesh ratio $\rho$. 
If $f\in H^p_4$, then
\[
\|f- T_{\Xi} f\|_p \le 
C h^{4}\|f\|_{H^p_4}.
\]
If $0<s<4$ and $f\in B_{p,\infty}^s$, then
\[
\|f- T_{\Xi} f\|_p \le 
C h^{s}\|f\|_{B_{p,\infty}^s}.
\]
\end{theorem}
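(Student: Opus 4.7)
The plan is to combine the near-best approximation property (Corollary \ref{nearbest}) with the convergence rates from \cite{MNPW} (Theorem \ref{rates_full} and Corollary \ref{rates_lower}). Each of the two inequalities in the theorem will follow the same scheme: first assert uniform $L_p$-boundedness of $T_\Xi$, then invoke the appropriate abstract distance estimate, then absorb $\rho$-dependent factors into $C$ using quasiuniformity.

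First I would record why Theorem \ref{main} applies to $\kappa_{2,\sph^2}$. On $\sph^2$ we have $d=2$ and $m=2>d/2$, so the kernel is one of the Mat\'ern-type kernels studied in \cite{HNW}. By \cite[Proposition 4.5]{HNW} the Lagrange basis $(\chi_\xi)_{\xi\in\Xi}$ satisfies Assumption \ref{LagrangeDecay}, and by Lemma \ref{Bern_lemma} it satisfies Assumption \ref{Bernstein} (with $\epsilon=1/2$, since $m-d/2=1>1/2$). Together with Assumption \ref{LagrangeBasis}, the corollary at the end of Section \ref{Cond} gives the stability bound (\ref{comparison}) with constants depending only on $\rho$, $m$ and $\M$. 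Theorem \ref{main} then yields a constant $\mathcal K=\mathcal K(\rho,\sph^2,m)$ such that $\|T_\Xi\|_{L_p\to L_p}\le\mathcal K$ for every $1\le p\le\infty$. Corollary \ref{nearbest} therefore gives
\[
\|f-T_\Xi f\|_p \le (1+\mathcal K)\,\mathrm{dist}_p\bigl(f,S(\kappa_{2,\sph^2},\Xi)\bigr)
\]
for every $f\in L_p(\sph^2)$.

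Second, I would identify $S(\kappa_{2,\sph^2},\Xi)$ with the space $S_\Xi$ of \cite{MNPW}. The ODE analysis recalled in the example shows that $\kappa_{2,\sph^2}=\phi_\beta$ for $\beta=4$, where $\phi_\beta=G_\beta+G_\beta*\psi$ with $\psi\in L^1$. Hence $S(\kappa_{2,\sph^2},\Xi)=S_\Xi$ in the notation of Theorem \ref{rates_full}. The exponent condition $\beta>d/p'$ with $d=2,\beta=4$ reduces to $4>2/p'$, which holds for all $p\in[1,\infty]$ (since $p'\in[1,\infty]$). Applying Theorem \ref{rates_full} with $\beta=4$ to $f\in H^p_4$ and combining with the near-best inequality yields
\[
\|f-T_\Xi f\|_p \le (1+\mathcal K)\, C\rho^d h^4\|f\|_{H^p_4},
\]
and quasiuniformity ($\rho\le\rho_0$) lets us collapse the constant to a single $C=C(\rho_0,\sph^2)$, giving the first claim.

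For the second claim I would apply Corollary \ref{rates_lower} with $\beta=4$, $0<s<4$, to an $f\in B^s_{p,\infty}$, obtaining
\[
\mathrm{dist}_p\bigl(f,S(\kappa_{2,\sph^2},\Xi)\bigr)\le C(1+\rho^d)h^s\|f\|_{B^s_{p,\infty}},
\]
and again use Corollary \ref{nearbest} together with quasiuniformity to absorb the $(1+\rho^d)$ factor. No step is genuinely hard: the only subtlety is making sure the hypotheses of Theorem \ref{rates_full} and Corollary \ref{rates_lower} are verified for \emph{all} $p\in[1,\infty]$ simultaneously, which is why the identification $\beta=4$ (rather than the marginal value $\beta=d/2+1/2$) is essential — it provides the slack needed at the endpoints $p=1$ and $p=\infty$ where $d/p'$ is largest.
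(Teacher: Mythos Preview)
Your proposal is correct and follows essentially the same approach as the paper: the paper simply states that the theorem follows by combining Corollary \ref{nearbest} with Theorem \ref{rates_full} and Corollary \ref{rates_lower}, after having observed that $\kappa_{2,\sph^2}=\phi_\beta$ with $\beta=4$. Your write-up is more explicit about verifying the hypotheses (Assumptions \ref{LagrangeDecay} and \ref{Bernstein}, and the condition $\beta>d/p'$ for all $p$), but the logical structure is identical.
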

\section{Conclusion}\label{Conclusion}

A major practical drawback in using the Lagrange basis is the computational overhead needed to construct it. A fundamental challenge will be to construct local kernel 
bases that satisfy (\ref{comparison}) {\em efficiently}.
Another, mainly theoretical, challenge is to find kernels satisfying the two basic assumptions of this
paper for which precise $L_p$ error estimates are known, as was the case in the previous section for 
$\kappa_{2,\sph^2}$. 
A third issue is to find kernels having explicit, closed form representations (unlike the case $\kappa_{2,\sph^2}$, which is given as the solution of a fourth order ODE).

These last two points lead to the following conjectures

{\bf 1. Kernels on  spheres:}
The first conjecture deals with approximation by restrictions to spheres
of the surface splines. I.e., these are kernels used on $\reals^d$ restricted to $\sph^{d-1}$. Earlier analytic properties of these have been studied in
\cite{HL,NSW,CF}. The {\em restricted surface splines} are known to be {\em conditionally positive definite}.

For centers $\Xi\subset \sph^d$, define kernel spaces
$$S(\varphi_m,\Xi):= \left\{\sum_{\xi\in\Xi} A_{\xi} \varphi_m(\cdot,\xi) + P\mid
P\in \Pi_{\lceil m-d/2\rceil}\, \text{and}\,
\sum_{\xi\in \Xi} A_{\xi} Q(\xi) = 0\,,\forall Q\in \Pi_{\lceil m-d/2\rceil}\right\} $$
 generated by shifts of the {restricted surface splines}
$$\varphi_m(x,y) :=
\begin{cases} 
  (1-\langle x, y\rangle)^{m-d/2}\log (1-\langle x, y\rangle) &\quad d\, \text{even}\\
  (1-\langle x, y\rangle)^{m-d/2} &\quad d\, \text{odd}
\end{cases},$$
and spherical harmonics of degree $\lceil m-d/2\rceil$ or less. 
Such kernels are known to be of the form $G_{\beta} +G_{\beta}*\psi$, with $\beta=2m$ \cite[(3.6)]{MNPW} and thus have a precise, theoretical error analysis (similar to the case of $\kappa_{2,\sph^2}$ 
of the previous section).
They also invert an elliptic differential operator of the form $\prod_{j=1}^m (\Delta - r_j)$, with $r_1,\dots,r_m$ a sequence of real numbers \cite[Lemma 3.4]{Hsphere}, as such, they are very similar to the kernels $\kappa_{m,\sph^d},$ which
also invert elliptic differential operators of order $2m$. 
\begin{conjecture*}
If $\Xi$ is quasiuniform with mesh ratio $\rho$, the Lagrange functions $\chi_{\xi}\in S(\varphi_m,\Xi)$ are a local basis (in the sense of Assumption \ref{LagrangeDecay}) and 
satisfy the H\"{o}lder continuity assumption, Assumption \ref{Bernstein}. 
\end{conjecture*}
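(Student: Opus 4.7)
The strategy mirrors the approach used for the Mat\'ern-type kernels $\kappa_{m,\M}$ in \cite{HNW}, but must accommodate the polynomial side conditions intrinsic to conditionally positive definite kernels. The three main steps are: set up a variational characterization of $\chi_\xi$, obtain exponential decay by a bulk-chasing argument, and deduce H\"older continuity from regularity of $\varphi_m$.

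For Assumption \ref{LagrangeDecay}, I would first observe that because $\varphi_m = G_{2m} + G_{2m}*\psi$ inverts an elliptic operator $L = \prod_{j=1}^{m}(\Delta_{\sph^d} - r_j)$ of order $2m$, the space $S(\varphi_m,\Xi)$ sits naturally inside a semi-Hilbert space (a Beppo Levi-type space) whose seminorm is equivalent to $|\cdot|_{W_2^m(\sph^d)}$ modulo $\Pi_{\lceil m-d/2\rceil}$, and $\chi_\xi$ is the unique minimizer of $|u|_{W_2^m(\sph^d)}$ among all $u$ in this space satisfying $u|_\Xi=\delta_{\xi,\cdot}$. With this in hand, I would adapt the ``zeros lemma'' or bulk-chasing argument from \cite[Section 4]{HNW}: partition $\sph^d$ into dyadic annuli around $\xi$, and for each radius $R$ construct, by a partition-of-unity cutoff followed by re-interpolation on $\Xi$, a competitor $u$ that agrees with $\chi_\xi$ on $\Xi$ but whose $W_2^m$-seminorm outside $B(\xi,R)$ is strictly smaller. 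Minimality yields a recursive inequality of the form
\[
|\chi_\xi|^2_{W_2^m(B(\xi,2R)^c)} \le \lambda\,|\chi_\xi|^2_{W_2^m(B(\xi,R)^c)}, \qquad \lambda<1,
\]
and iterating produces exponential energy decay with rate $q^{-1}$. A Sobolev embedding (valid since $2m>d$) converts this into the pointwise decay required by Assumption \ref{LagrangeDecay}.

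For Assumption \ref{Bernstein}, I would follow the template of Lemma \ref{Bern_lemma}. Since $\varphi_m$ is the Green's function of an elliptic operator of order $2m$ up to a smoothing convolution, it is smooth off the diagonal and, at the diagonal, its regularity is governed by the gap $2m-d$; in particular each shift $\varphi_m(\cdot,\zeta)$ is uniformly H\"older continuous with exponent $\epsilon$ determined by $2m-d$. Combining this with a uniform $\ell_1$-type bound on the coefficients $(A_\zeta)$ of $\chi_\xi$, itself a consequence of the localization obtained in the previous step together with the renormalization analysis of Section \ref{Cond}, yields the desired H\"older estimate for $\chi_\xi = \sum_\zeta A_\zeta\varphi_m(\cdot,\zeta)+P$. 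The polynomial piece $P$ lies in the finite-dimensional space $\Pi_{\lceil m-d/2\rceil}$ of spherical harmonics, and a standard Markov-type inequality on $\sph^d$ shows its contribution is uniformly controlled once $\|\chi_\xi\|_\infty$ is bounded.

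The main obstacle is the polynomial side condition in the bulk-chasing step. In the positive definite setting of \cite{HNW}, one freely multiplies $\chi_\xi$ by a cutoff and re-interpolates on $\Xi$ without disturbing the variational framework. In the conditionally positive definite case, however, every member of $S(\varphi_m,\Xi)$ must satisfy the \emph{global} moment conditions $\sum_{\zeta}A_\zeta Q(\zeta)=0$ for $Q\in\Pi_{\lceil m-d/2\rceil}$, and a localized cutoff will in general destroy these. The technical heart of the conjecture is therefore to correct the moment conditions by a small spherical harmonic $P$ without spoiling the exponential decay rate --- presumably by exploiting the fact that $\dim\Pi_{\lceil m-d/2\rceil}$ is fixed and independent of $\#\Xi$, so that the correction can be absorbed into the annular energy estimate uniformly. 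I expect this is precisely the step that the authors leave open.
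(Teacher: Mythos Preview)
The statement you are attempting to prove is explicitly a \emph{Conjecture} in the paper --- it appears in Section~\ref{Conclusion} among the open problems, and the authors provide no proof. There is therefore nothing in the paper to compare your proposal against; the authors themselves do not claim to know how to carry out the argument.

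That said, your diagnosis of the situation is accurate. The approach you outline is exactly the natural extension of the \cite{HNW} machinery, and you have correctly isolated the genuine obstruction: in the conditionally positive definite setting the cutoff-and-reinterpolate step of the bulk-chasing argument destroys the global moment conditions $\sum_\zeta A_\zeta Q(\zeta)=0$, and one must repair them without losing the exponential energy decay. This is precisely why the authors state it as a conjecture rather than a theorem. One minor remark on your H\"older step: the paper's own template (Lemma~\ref{Bern_lemma}) does \emph{not} proceed via regularity of the kernel plus $\ell_1$ coefficient bounds as you describe; it works directly from the exponential $W_2^m$-energy decay of $\chi_\xi$ together with the zeros lemma (Lemma~\ref{zeros}) and Sobolev embedding. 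If the localization part of the conjecture were established, that same route would give Assumption~\ref{Bernstein} immediately, without needing separate control of the coefficient sequence or the polynomial part.
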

The validity of this conjecture would have several immediate consequences:
\begin{itemize}
 \item[$\lbrack$A$\rbrack$]  The associated Lebesgue constants for interpolation are bounded by a constant
depending only on $\rho$ (and not on $\# \Xi$),
\item[$\lbrack$B$\rbrack$] Interpolation provides precise $L_{\infty}$ approximation rates. For 
functions $f\in C^s$, $s\le 2m$, one has
$$\|f - I_{\Xi}f\|_{\infty} = \mathcal{O}(h^s).$$
\item[$\lbrack$C$\rbrack$] The Lagrange basis satisfies the stability condition (\ref{comparison}), 
\item[$\lbrack$D$\rbrack$] The $L_p$ norms of the $L_2$ projector $T_{\Xi}$ associated with these spaces are bounded with constants depending only on $\rho$.
\item[$\lbrack$E$\rbrack$] The $L_2$ projector provides precise $L_p$ approximation rates. I.e., approximation order $s$ for functions having $L_p$ smoothness $s$, for $s\le 2m$ (as determined by
inclusion in Bessel potential $H_{2m}^p$ or Besov spaces $B_{p,\infty}^s$):
$$\|f - T_{\Xi}f\|_p = \mathcal{O}( h^s)$$
\end{itemize}

A more ambitious problem would be to prove the previous for all kernels
of the form 
$$\phi_{\beta}=G_{\beta} +G_{\beta}*\psi,\quad \beta>d,$$ 
which includes the compactly supported SBFs, as well as most other SBFs of finite smoothness.
\begin{conjecture*}
If $\Xi$ is quasiuniform with mesh ratio $\rho$, the Lagrange functions 
$\chi_{\xi}\in S(\phi_\beta,\Xi)$ are a local basis (in the sense of Assumption \ref{LagrangeDecay})
and satisfy Assumption \ref{Bernstein}. 
\end{conjecture*}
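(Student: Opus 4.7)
The plan is to extend the strategy used in \cite{HNW} for the Mat\'ern-type kernels $\kappa_{m,\M}$ (and articulated in Lemma \ref{Bern_lemma}) to the broader family $\phi_\beta = G_\beta + G_\beta * \psi$ on $\sph^d$. Both conclusions of the conjecture rest on first characterizing the native space $\cN_{\phi_\beta}$: using spherical harmonic expansions, $G_\beta$ has Fourier coefficients $(\ell + \lambda_d)^{-\beta}$ and $G_\beta * \psi$ has coefficients $(\ell+\lambda_d)^{-\beta} \widehat\psi(\ell)$. Since $\psi \in L^1$ we have $|\widehat\psi(\ell)| \le \|\psi\|_1$; combined with positive-definiteness of $\phi_\beta$ (i.e.\ $1 + \widehat\psi(\ell)$ bounded away from zero), this shows $\cN_{\phi_\beta}$ is norm-equivalent to the Bessel potential space $H^2_\beta(\sph^d)$, with equivalence constants depending only on $\|\psi\|_1$ and the spectral gap.

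With this equivalence in hand, the localization estimate (Assumption \ref{LagrangeDecay}) would follow by the bulk-chasing iteration argument of \cite{HNW}. The essential ingredient is a zeros lemma: for any $s \in H^2_\beta$ vanishing on a quasiuniform set $\Xi' \cap B(x_0,R)$ of fill $h$, one has $|s(x_0)| \le C h^{\beta - d/2} \|s\|_{H^2_\beta}$, with constants uniform in $x_0$ and $\Xi'$ provided the mesh ratio stays bounded. Applying this to $\chi_\xi$, which vanishes on all of $\Xi \setminus \{\xi\}$, and then iterating by decomposing $\chi_\xi$ into local pieces on expanding annuli $A_k = \{x : 2^k h \le \d(x,\xi) < 2^{k+1} h\}$ and using the reproducing formula together with a cutoff of $\phi_\beta(\cdot,x)$ on each annulus, should yield the desired bound $|\chi_\xi(x)| \le C_1 \exp(-\nu \d(x,\xi)/q)$. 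The zeros lemma on $\sph^d$ can be proven via local coordinates and the standard Sobolev embedding, provided $\beta > d/2$.

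For H\"older continuity (Assumption \ref{Bernstein}), my plan is to invoke the Bernstein inequality of \cite[Theorem 5.5]{MNPW}, which is stated precisely for kernels of the form $G_\beta + G_\beta * \psi$ with $\psi \in L^1$: one obtains $\|\nabla s\|_\infty \le C q^{-1}\|s\|_\infty$ for $s \in S(\phi_\beta,\Xi)$. Applying this to $\chi_\xi$ and combining with the uniform $L_\infty$ bound from the localization step gives $|\chi_\xi(x) - \chi_\xi(y)| \le C q^{-1} \|\chi_\xi\|_\infty \d(x,y)$, which is Assumption \ref{Bernstein} with $\epsilon = 1$. This step is essentially already in the literature; the real content of the conjecture is the localization.

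The main obstacle will be establishing the zeros lemma with constants that remain uniform across the whole class $\{\phi_\beta\}$, and then extracting exponential (rather than merely polynomial) decay from the iterative step; the latter hinges on exploiting the smoothing property of $G_\beta$ (the Green's function of an elliptic operator of order $\beta$) at each dyadic scale, which is where the condition $\beta > d$ enters in giving enough regularity headroom. A secondary difficulty concerns compactly supported SBFs (e.g.\ Wendland functions), where representing $\phi_\beta$ as $G_\beta + G_\beta * \psi$ with $\psi \in L^1$ must first be verified; this representation is known for specific families but a general criterion ensuring it (while controlling $\|\psi\|_1$ uniformly across the class) appears to be the principal gap separating the conjecture from a theorem.
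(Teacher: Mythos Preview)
The statement you are attempting is explicitly labeled a \emph{Conjecture} in the paper (Section~\ref{Conclusion}), and the paper contains no proof of it; indeed, the authors flag it as ``a more ambitious problem'' than the preceding conjecture on restricted surface splines. There is therefore no paper proof to compare against. Your proposal is, appropriately, a research outline rather than a proof, and you have correctly identified it as such by listing unresolved obstacles.

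On the substance of your plan: the route via the Bernstein inequality of \cite[Theorem~5.5]{MNPW} for Assumption~\ref{Bernstein} is sound and essentially available once localization is in hand. The serious open content is, as you say, Assumption~\ref{LagrangeDecay}. However, the obstruction you emphasize---uniformity of constants in the zeros lemma and the $L^1$ representation for compactly supported SBFs---is not the central one. The bulk-chasing argument of \cite{HNW} relies on the native space being an \emph{integer-order} Sobolev space $W_2^m$, so that the native-space energy $\|\chi_\xi\|_{\cN}^2$ decomposes as an integral of local quantities (covariant derivatives) over subdomains; this is what drives the exponential decay in \cite[Corollary~4.4]{HNW}. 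For general $\phi_\beta$ with $\beta\notin 2\ints$, the operator $L_d^\beta$ is only pseudo-differential, the native space $H^2_{\beta/2}$ (note the index: your $H^2_\beta$ should be $H^2_{\beta/2}$ in the paper's convention) does not localize in this way, and the iteration step has no obvious analogue. This is precisely why the paper treats the restricted surface splines---which \emph{do} invert a genuine differential operator $\prod_j(\Delta - r_j)$---as the more tractable case, and calls the general $\phi_\beta$ conjecture ``more ambitious.'' Your sketch would need a new idea here, not merely tighter bookkeeping.
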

As a corollary, [A]--[E] would hold for  kernels $\phi_{\beta}$ as well.

{\bf 2. Surface splines on ${SO(3)}$:} The second conjecture deals with 
an analogous problem on $SO(3)$.
In this case, we consider the family of kernels
$$k_m(x,y) :=
  \left(\sin (\omega\bigl(y^{-1} x)/2\bigr)\right)^{m-3/2}.
$$
where $\omega(z)$ is the rotational angle of $z\in SO(3)$ (the expression
$\omega(y^{-1} x)$ is the geodesic distance between rotations $x$ and $y$).
For centers $\Xi\subset SO(3)$, define kernel spaces
$$S(k_m,\Xi):= \left\{\sum_{\xi\in\Xi} A_{\xi} \phi_m(\cdot,\xi) + P\mid
P\in \Pi_{\lceil m-3/2\rceil}\, \text{and}\,
\sum_{\xi\in \Xi} A_{\xi} Q(\xi) = 0,\,
\forall Q\in \Pi_{\lceil m-3/2\rceil}\right\} $$
where  $\Pi_{\lceil m-3/2\rceil}$ is the space of Wigner $D$-functions of degree $\lceil m-3/2\rceil$ or less. 
As in the previous case, they invert a differential operator of the form $\prod_{j=1}^m (\Delta - r_j)$, \cite[Lemma 3]{HS}, and have theoretical, precise error estimates. 
\begin{conjecture*}
If $\Xi\subset SO(3)$ is quasiuniform with mesh ratio $\rho$, the Lagrange functions $\chi_{\xi}\in S(k_m,\Xi)$ are a local basis (in the sense of Assumption \ref{LagrangeDecay}), and
  satisfy Assumption \ref{Bernstein}.
\end{conjecture*}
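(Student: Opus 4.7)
The plan is to port the strategy of \cite{HNW} (for localization, Assumption \ref{LagrangeDecay}) together with that of Lemma \ref{Bern_lemma} in the appendix (for H\"older continuity, Assumption \ref{Bernstein}) to the $SO(3)$ setting. Since $SO(3)$ is a compact Riemannian manifold of dimension $d=3$ with its bi-invariant metric, Manifold Properties hold automatically. The key structural fact is that $k_m$ inverts an elliptic operator $L = \prod_{j=1}^m(\Delta - r_j)$ of order $2m > d$, and is thus conditionally positive definite relative to the space $\Pi_{\lceil m-3/2\rceil}$ of low-degree Wigner $D$-functions; the native seminorm of interest is (up to equivalence) the $W_2^m$-seminorm modulo this polynomial space.

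For the localization step, $\chi_\xi \in S(k_m,\Xi)$ is characterized variationally as the minimizer of this Sobolev-type seminorm subject to the Lagrange conditions $\chi_\xi(\zeta) = \delta(\xi,\zeta)$ for $\zeta \in \Xi$. Following the strategy of \cite{HNW}, I would construct a ``bump function'' competitor $\widetilde{\chi}_\xi$ supported in a small ball about $\xi$, satisfying the same Lagrange data, and exploit a zeros lemma for Sobolev functions, obtained from local polynomial reproduction imported from $\reals^3$ via the exponential map. Together these yield a geometric decay rate for the $L_2$ mass of $\chi_\xi$ on dyadic annuli around $\xi$, and iteration across annuli converts this to the pointwise exponential bound of Assumption \ref{LagrangeDecay}, with decay parameter $\nu$ determined by the geometric ratio and by $\rho$.

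For H\"older continuity, I would follow Lemma \ref{Bern_lemma} closely. The Sobolev embedding $W_2^m(SO(3)) \hookrightarrow C^{0,\epsilon}(SO(3))$ holds for any $\epsilon \in (0, \min(1, m - 3/2))$, which is a nonempty range precisely because of the standing hypothesis $m > 3/2$. The required scaling in $q$ will then come from a Bernstein-type inequality controlling $\|s\|_{W_2^m}$ by an appropriate negative power of $q$ times $\|s\|_{L_\infty}$ for $s \in S(k_m,\Xi)$, produced by the same two-step procedure (kernel representation together with the ellipticity of $L$) used in the appendix.

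The principal obstacle throughout is the conditional positive definiteness of $k_m$. Every element $s \in S(k_m,\Xi)$ is written as $\sum_\zeta a_\zeta k_m(\cdot,\zeta) + P$ with a polynomial tail $P \in \Pi_{\lceil m-3/2\rceil}$ and a moment side condition $\sum_\zeta a_\zeta Q(\zeta) = 0$ for all $Q \in \Pi_{\lceil m-3/2\rceil}$. Consequently, each competitor used in the bump-function construction, and each polynomial subtracted in the zeros lemma, must be chosen to preserve this moment condition. This is the step where the argument diverges from the positive definite case of \cite{HNW}, and it is precisely where the representation theory of $SO(3)$ through Wigner $D$-functions must take the role played by spherical harmonics in the spherical surface spline analyses of \cite{HL,NSW,CF}.
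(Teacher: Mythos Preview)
The statement you are attempting to prove is presented in the paper as a \emph{conjecture}, not a theorem; the paper offers no proof whatsoever. There is therefore no ``paper's own proof'' against which to compare your proposal.

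As a proof sketch, your outline is reasonable and correctly identifies the natural strategy: port the bump-function/zeros-lemma localization argument of \cite{HNW} and the H\"older argument of Lemma~\ref{Bern_lemma} to $SO(3)$, using that $k_m$ inverts an elliptic operator of order $2m$. However, what you have written is a plan, not a proof, and the place where the plan is weakest is exactly the place you yourself flag as the ``principal obstacle.'' In the positive definite case treated in \cite{HNW}, the Lagrange function is the \emph{unique} native-space norm minimizer subject to the interpolation constraints, and the bump-function competitor argument works because any admissible interpolant is a valid competitor. In the conditionally positive definite setting here, the native ``norm'' is only a seminorm (with kernel $\Pi_{\lceil m-3/2\rceil}$), the coefficient vectors must satisfy moment conditions, and each element of $S(k_m,\Xi)$ carries a polynomial tail $P$ that is global and does not decay. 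You assert that competitors and zeros-lemma polynomials ``must be chosen to preserve this moment condition,'' but you do not indicate how to do this while keeping the competitor compactly supported, nor how to control the polynomial tail so that the annulus-by-annulus decay estimate survives. This is not a detail that can be waved through by analogy with the spherical surface spline literature; it is precisely the content of the conjecture, and your sketch does not resolve it.
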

In this case, points [A]--[E] also would follow.
\section[Appendix]{Appendix}\label{Appendix}
In this section we relate the size of a H\"{o}lder seminorm of a function to 
its norm in a Sobolev space. 
It is easy enough to state and prove in the Euclidean setting.
\begin{lemma}\label{EucZeros}
Let $\epsilon>0$, $m>d/2 + \epsilon$, $r>0$ and suppose the finite set of points 
$\Xi\subset B(0,r)$ is sufficiently dense 
so that $h := \max_{x\in B(0,r)} \d(x,\Xi) < h_0 r$ (with $h_0:= 1/(32m^2)$).
Then there is a constant so that for every $f\in W_2^m \bigl(B(0,r)\bigr)$ and $y\in B(0,r)$ 
$$|f(y) - f(0)| \le C |y|^{\epsilon} \, r^{m-d/2- \epsilon} 
\left( \sum_{|\alpha| = m} \int_{B(0,r)} \left| D^{\alpha} f (x)\right|^2 \dif x\right)^{1/2}.
$$
\end{lemma}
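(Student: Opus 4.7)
The plan is to split $f$ into a polynomial part and a remainder via a Bramble--Hilbert argument, estimate the remainder directly from the Sobolev embedding $W_2^m\hookrightarrow C^{0,\epsilon}$ (valid because $m>d/2+\epsilon$), and use the density of $\Xi$ together with the (implicit, in view of the label and the counterexample $f(x)=x_1$, which has $|f|_{W_2^m}=0$ but nonzero $f(y)-f(0)$) hypothesis $f|_\Xi=0$ to control the polynomial part. The peculiar density constant $h_0=1/(32m^2)$ is essentially the Madych--Nelson/Narcowich--Ward threshold needed to turn $\Xi$ into a \emph{norming set} for polynomials of degree $<m$ on $B(0,r)$, so this one condition suffices to handle both the remainder and the polynomial.

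\textbf{Step 1 (Bramble--Hilbert splitting).} Using the averaged Taylor polynomial of $f$ on $B(0,r)$, produce $p\in\Pi_{m-1}$ such that $g:=f-p$ satisfies the scale-invariant estimates
\[
|g|_{W_2^k(B(0,r))}\le C\,r^{m-k}\,|f|_{W_2^m(B(0,r))},\qquad k=0,\ldots,m,
\]
thereby absorbing all lower-order Sobolev content of $f$ into $p$.

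\textbf{Step 2 (H\"older estimate for $g$).} Sobolev embedding on $B(0,1)$ combined with a dilation of the ball from radius $1$ to $r$ gives
\[
[g]_{C^{0,\epsilon}(B(0,r))}\le C\,r^{-\epsilon-d/2}\sum_{k=0}^m r^k\,|g|_{W_2^k(B(0,r))}\le C\,r^{m-d/2-\epsilon}\,|f|_{W_2^m},
\]
so $|g(y)-g(0)|\le |y|^\epsilon[g]_{C^{0,\epsilon}}\le C|y|^\epsilon r^{m-d/2-\epsilon}|f|_{W_2^m}$, and the same rescaling yields $\|g\|_{L_\infty(B(0,r))}\le C\,r^{m-d/2}|f|_{W_2^m}$.

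\textbf{Step 3 (Controlling $p$ via $\Xi$, then Markov).} Since $f$ vanishes on $\Xi$, $|p(\xi)|=|g(\xi)|\le \|g\|_{L_\infty(B(0,r))}\le Cr^{m-d/2}|f|_{W_2^m}$ for every $\xi\in\Xi$. The hypothesis $h<r/(32m^2)$ is precisely the standard norming-set threshold for $\Pi_{m-1}|_{B(0,r)}$, so
\[
\|p\|_{L_\infty(B(0,r))}\le 2\|p|_\Xi\|_{\ell_\infty(\Xi)}\le C\,r^{m-d/2}|f|_{W_2^m}.
\]
A Markov inequality on the ball then gives $\|\nabla p\|_{L_\infty(B(0,r))}\le C m^2 r^{-1}\|p\|_{L_\infty(B(0,r))}$, whence
\[
|p(y)-p(0)|\le C\,\frac{|y|}{r}\,\|p\|_{L_\infty(B(0,r))}\le C|y|^\epsilon r^{m-d/2-\epsilon}|f|_{W_2^m}\cdot(|y|/r)^{1-\epsilon},
\]
and since $|y|/r\le 1$ with $\epsilon\le 1$ (the natural Hölder range, consistent with Assumption \ref{Bernstein}) the last factor is at most $1$. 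Adding the estimates from Step 2 and Step 3 finishes the proof.

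\textbf{The main obstacle} is Step 3, namely verifying that the quadratic-in-$m$ density bound $h_0=1/(32m^2)$ is sufficient to turn $\Xi$ into a norming set for $\Pi_{m-1}$ on $B(0,r)$ with a universal constant. This is where the set $\Xi$ interacts nontrivially with the polynomial degree $m$, and where the Markov constant for polynomials along a diameter of the ball dictates the precise numerical threshold; once this step is in hand, the remaining pieces are routine dilation and embedding arguments.
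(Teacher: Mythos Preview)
Your proof is correct and, in particular, your diagnosis that the hypothesis $f|_{\Xi}=0$ is implicit is exactly right: without it the statement is false, and in the paper's argument the zeros enter through the cited lemma.

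The paper's route is more compressed and ordered differently. It first applies the Sobolev embedding $W_2^m(B(0,1))\hookrightarrow C^{0,\epsilon}$ to the \emph{full} Sobolev norm, rescales to radius $r$ to obtain
\[
\frac{|f(y)-f(0)|}{|y/r|^{\epsilon}}\le C\Bigl(\sum_{|\alpha|\le m} r^{2|\alpha|-d}\|D^{\alpha}f\|_{L_2(B(0,r))}^2\Bigr)^{1/2},
\]
and then invokes a single black-box Bramble--Hilbert/zeros lemma (\cite[Lemma~3.7]{HNW}) to collapse the weighted full norm on the right to $Cr^{m-d/2}|f|_{W_2^m}$. Your argument instead performs the polynomial splitting $f=p+g$ up front, handles $g$ by the same embedding-and-rescale trick, and then treats $p$ explicitly via the norming-set property of $\Xi$ combined with a Markov inequality. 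What you gain is a self-contained proof that makes visible exactly where the threshold $h_0=1/(32m^2)$ is spent (on the norming-set/Markov step); what the paper gains is brevity, since the entire interaction with $\Xi$ is delegated to a previously established lemma. Both approaches are standard, and your Step~3 is essentially what lies inside the cited lemma.
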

\begin{proof}
Suppose $r=1$. By the Sobolev embedding theorem, there is a constant so that
$$\frac{|u(Y) - u(0)|}{|Y|^{\epsilon} } \le C 
\left( \sum_{|\alpha| \le m} \int_{B(0,1)} \left| D^{\alpha} u(X)\right|^2 \dif X\right)^{1/2}
$$
for all $u\in W_2^m(B(0,1))$. By rescaling this with $rY=y$ and $f(y) = u(Y)$, we 
have 
$$\frac{|f(y) - f(0)|}{|y/r|^{\epsilon} } \le C 
\left( \sum_{|\alpha| \le m} r^{2|\alpha|-d} \int_{B(0,r)} \left| D^{\alpha} f(x)\right|^2 \dif x\right)^{1/2}
$$
for all $f\in W_2^m(B(0,r))$. We can now apply \cite[Lemma 3.7]{HNW} (a standard Bramble-Hilbert type argument) to  obtain
$$ C 
\left( \sum_{|\alpha| \le m} r^{2|\alpha|-d} \int_{B(0,r)} \left| D^{\alpha} f(x)\right|^2 \dif x\right)^{1/2}
\le Cr^{m -d/2} \left( \sum_{|\alpha| = m} \int_{B(0,r)} \left| D^{\alpha} f(x)\right|^2 \dif x\right)^{1/2}
$$
which proves the lemma.
\end{proof}

Before proceeding, we need some necessary geometric background, which can be
found in \cite{HNW}.

\paragraph{Exponential map} We define the \emph{exponential map} 
$\Exp_p\colon \  \reals^d \to \M$, 
our basic diffeomorphism,  by letting 
$\Exp_p(0)=p$ and $\Exp_p(s\bft_p)=\gamma_p(s)$, 
where $\gamma_p(s)$ is the unique geodesic that passes through $p$ for $s=0$ 
and has a tangent vector $\dot \gamma_p(0)=\bft_p$ of length 1.

\paragraph{Metric Equivalence} The family of exponential maps are uniformly isomorphic; i.e., there are constants $0<\Gamma_1 \le \Gamma_2<\infty$ so that
for every $p_0\in \M$ and every $x,y\in B(0,\inj)$
\begin{equation}\label{isometry}
 \Gamma_1|x-y|\le \d(\Exp_{p_0}(x), \Exp_{p_0}(y)) \le \Gamma_2 |x-y|.
\end{equation}

\paragraph{Sobolev spaces}
Let $\Omega\subset \man$ be a measurable subset. 
We define the Sobolev space $W_2^m(\Omega)$ to be the space of all functions $f\colon \ \M \to \reals$ 
so that the real valued function 
$p\mapsto | \nabla^k f |_{g,p} =\langle \nabla^m f , \nabla^m f \rangle_{g,p}^{1/2} $ defined for 
$p\in\Omega$ is in $L_2$. The associated norms are:
%
%
\begin{equation}\label{def_spn} 
\|f\|_{W_2^m(\Omega)}:= 
\left( 
  \sum_{k=0}^m
  \int_{\Omega} 
    | \nabla^k f |_{g,p}^2
  \, \dif \mu(p)\right)^{1/2}.
\end{equation}
For the precise meaning of $\langle \nabla^m f , \nabla^m f \rangle_{g,p}^{1/2}$, we refer the reader
to \cite[Section 2]{HNW}, although we note that in $\reals^d$, its exact meaning
is 
$$\sum_{|\alpha|=k} \begin{pmatrix} k\\ \alpha \end{pmatrix} D^{\alpha} f(p) D^{\alpha} g(p),$$
i.e, the inner product of the $k$th order derivative tensor at $p$, in which case (\ref{def_spn})
is the usual Euclidean Sobolev norm.

\paragraph{Metric equivalence for Sobolev spaces}
The exponential map also gives rise to the following metric equivalence, proved in \cite[Lemma]{HNW}, between Sobolev spaces on regions in the manifold and Sobolev spaces on regions in $\reals^d$.
For $m\in\nats$ and $0<r<\inj$,  there are constants $0 < c <C$ so that
for any measurable 
$\Omega \subset B_{r}$, for all $j\in \nats$, $j\le m$, and for any $p_0\in \M$, 
\begin{equation}\label{SobEquiv}
c \| u\circ\Exp_{p_0}\|_{W_\sfp^j(\Omega)}\le\|u\|_{W_\sfp^j(\Exp_{p_0}(\Omega))}\le C\| u\circ\Exp_{p_0}\|_{W_\sfp^j(\Omega)}
\end{equation}

We are now able to state the analogous zeros lemma for manifolds:
\begin{lemma}[Zeros Lemma]\label{zeros}
Let $\epsilon>0$, let $m$ be a positive integer, greater than $d/2+\epsilon$, 
and let $r$ be a positive real number less than $\inj$, the injectivity radius of $\M$. 
Suppose that $\Xi \subset B(x,r)\subset \M$
is a discrete set with $h \le \Gamma_1 h_0 r$. If 
$u \in W^{m}_2 (B(x,r))$ satisfies $u_{|_{\Xi}} = 0$, then for every $x,y\in B(x,r)$,
$$|u(x)-u(z)| \le C r^{m-\epsilon-d/2}\d(x,z)^{\epsilon}\|u\|_{W^{m}_2(B(x,r))},$$ 
where $C$ is a constant independent of $u,x,z,h$ and $r$, and where $\Gamma_1$ depends only on $m$ and $\M$.
\end{lemma}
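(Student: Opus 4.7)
The plan is to reduce to the Euclidean case (Lemma \ref{EucZeros}) via the exponential map $\Exp_x$ centered at $x$, transferring hypotheses and conclusion through the metric equivalences (\ref{isometry}) and (\ref{SobEquiv}). Let $\tilde u := u\circ \Exp_x$ and $\tilde \Xi := \Exp_x^{-1}(\Xi)$; both are defined on $\Omega := \Exp_x^{-1}(B(x,r))$, which by (\ref{isometry}) satisfies $B(0,r/\Gamma_2)\subset \Omega\subset B(0,r/\Gamma_1)$. Evidently $\tilde u|_{\tilde \Xi}=0$, and (\ref{isometry}) bounds the Euclidean fill distance of $\tilde \Xi$ inside a suitable subball $B(0,\tilde r)\subset \Omega$ (with $\tilde r$ proportional to $r$) by $h/\Gamma_1$. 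The hypothesis $h\le \Gamma_1 h_0 r$ is chosen precisely so that this is at most $h_0 \tilde r$, i.e., the Bramble--Hilbert density condition that underlies Lemma \ref{EucZeros}.

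Next, I apply a mild strengthening of Lemma \ref{EucZeros}: its proof begins with the Sobolev embedding $W_2^m \hookrightarrow C^{0,\epsilon}$ (valid since $m>d/2+\epsilon$), which actually delivers the full H\"older estimate $|\tilde u(Y_1)-\tilde u(Y_2)|\le C|Y_1-Y_2|^\epsilon \|\tilde u\|_{W_2^m(B(0,1))}$, and not merely the case $Y_2=0$. Rescaling and combining with the Bramble--Hilbert argument \cite[Lemma 3.7]{HNW} (which uses $\tilde u|_{\tilde\Xi}=0$ and the density condition above to drop the lower-order derivatives) gives
$$
|\tilde u(Y_1)-\tilde u(Y_2)|
\le
C|Y_1-Y_2|^\epsilon\, \tilde r^{m-d/2-\epsilon}
\left(\sum_{|\alpha|=m}\int_{B(0,\tilde r)}|D^\alpha \tilde u|^2\,\dif X\right)^{1/2}
$$
for all $Y_1,Y_2\in B(0,\tilde r)$.

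Finally, given $y,z\in B(x,r)$, set $Y:=\Exp_x^{-1}(y)$ and $Z:=\Exp_x^{-1}(z)$. The inequality (\ref{isometry}) yields $|Y-Z|\le \Gamma_1^{-1}\d(y,z)$, while (\ref{SobEquiv}) bounds the Euclidean $W_2^m$-seminorm of $\tilde u$ above by a constant multiple of $\|u\|_{W_2^m(B(x,r))}$. Substituting these into the previous display produces the desired estimate $|u(y)-u(z)|\le C\, r^{m-\epsilon-d/2}\,\d(y,z)^\epsilon\|u\|_{W_2^m(B(x,r))}$.

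I expect the main obstacle to be careful bookkeeping of constants: one must choose $\tilde r$ comparable to $r$ so that (i) the Euclidean density condition $\tilde h\le h_0 \tilde r$ follows from the manifold hypothesis $h\le\Gamma_1 h_0 r$, (ii) $B(0,\tilde r)$ lies in $\Omega$ (so $\tilde u$ is defined on it), and (iii) the relevant pullbacks $\Exp_x^{-1}(y)$ for $y\in B(x,r)$ sit inside $B(0,\tilde r)$. A minor secondary subtlety is that Lemma \ref{EucZeros} as stated compares $f(y)$ only to $f(0)$; the genuine H\"older estimate between two arbitrary points has to be extracted from the Sobolev-embedding step of its proof, which supplies it for free.
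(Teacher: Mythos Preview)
Your approach is essentially the same as the paper's: pull back via $\Exp_x$, use (\ref{isometry}) to transfer the fill-distance hypothesis, apply the Euclidean lemma, and push forward using (\ref{SobEquiv}). The one difference is that you work harder than necessary. In the statement (despite its garbled quantifiers) one of the two points is always the \emph{center} $x$ of the ball, and $\Exp_x^{-1}(x)=0$; so Lemma~\ref{EucZeros} applies verbatim, and the ``mild strengthening'' to arbitrary pairs $Y_1,Y_2$ is not needed. The paper's proof is simply: set $z=\Exp_x(y)$, then $|u(x)-u(z)|=|\tilde u(0)-\tilde u(y)|$, apply Lemma~\ref{EucZeros}, and convert $|y|$ and $\|\tilde u\|_{W_2^m}$ back via (\ref{isometry}) and (\ref{SobEquiv}).
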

\begin{proof}
This follows from  (\ref{isometry}), (\ref{SobEquiv}) and Lemma \ref{EucZeros},
(which we can apply since $h\le \rad r$ implies that the $\Exp_x (\Xi)$ has fill distance
less than $h_0 r$)
  in precisely the same way as \cite[Lemma 3.8]{HNW}. Letting $z = \Exp_x(y)$
$$|u(x) - u(z)|= |\widetilde{u}(0)- \widetilde{u}(y)|
\le r^{m-d/2-\epsilon/2} |y|^{\epsilon} \|\widetilde{u}\|_{W_2^m(B(0,r))}\le Cr^{m-d/2-\epsilon} [\d(x,z)]^{\epsilon} \|u\|_{W_2^m(B(x,r))}.$$
We note that $|y| = |y-0|= \d(\Exp_x(y),\Exp_x(0))=\d(z,x)$.
\end{proof}
We now show that the Lagrange functions associated with the kernels $\kappa_{m,\M}$
satisfy the H\"{o}lder continuity property, Assumption \ref{Bernstein}.
\begin{lemma}\label{Bern_lemma}
Let $0<\epsilon\le 1$. For $m>d/2+\epsilon$ and for $\Xi\subset \M$ having mesh ratio $\rho$,
there is a constant $C$ depending only on $m$, $\M$, $\epsilon$ and $\rho$ for which 
$$|\chi_{\xi}(x) - \chi_{\xi}(y)|\le C \left[\frac{\d(x,y)}{q}\right]^{\epsilon}.$$
\end{lemma}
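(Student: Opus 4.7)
The plan is to combine the Zeros Lemma (Lemma~\ref{zeros}) with the minimum-norm property of the Lagrange function in the reproducing kernel Hilbert space of $\kappa_{m,\M}$. The pivotal global estimate I would first establish is $\|\chi_\xi\|_{W_2^m(\M)}\le C q^{d/2-m}$. To see this, build an explicit smooth interpolant $g_\xi$ that takes the value $1$ at $\xi$ and $0$ at every other $\zeta\in\Xi$: pull back a fixed Euclidean bump of radius $q/2$ through $\Exp_\xi$, which works because the separation radius $q$ guarantees $B(\xi,q)\cap\Xi=\{\xi\}$. A standard scaling computation yields $\|g_\xi\|_{W_2^m(\M)}\le Cq^{d/2-m}$ (via the metric equivalence~(\ref{SobEquiv})), and the min-norm characterization of $\chi_\xi$ transfers this bound to $\chi_\xi$ itself.

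With that in hand, I would split on the size of $\d(x,y)$. For $\d(x,y)\ge q/2$, Assumption~\ref{LagrangeDecay} immediately yields $|\chi_\xi(x)-\chi_\xi(y)|\le 2C_1\le C[\d(x,y)/q]^\epsilon$. For $\d(x,y)<q/2$, fix $R:=h/(\Gamma_1 h_0)$ (and assume $q$ small enough that $R<\inrad$), and set $B:=B(x,R)$, so that $y\in B$ and the density hypothesis of Lemma~\ref{zeros} is met. If $\xi\notin B$, then $\chi_\xi$ vanishes on $\Xi\cap B$ and the Zeros Lemma gives
\[
|\chi_\xi(x)-\chi_\xi(y)|\le C R^{m-\epsilon-d/2}\d(x,y)^\epsilon \|\chi_\xi\|_{W_2^m(B)}\le C[\d(x,y)/q]^\epsilon,
\]
using $R\sim q$ and the global Sobolev bound; explicitly $R^{m-\epsilon-d/2}q^{d/2-m}=R^{-\epsilon}(R/q)^{m-d/2}\le Cq^{-\epsilon}$. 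If instead $\xi\in B$, I would apply the Zeros Lemma to $\chi_\xi-g_\xi$, which vanishes throughout $\Xi$, use $\|\chi_\xi-g_\xi\|_{W_2^m(B)}\le 2Cq^{d/2-m}$, and then add back the contribution of $g_\xi$; the elementary Lipschitz estimate $|g_\xi(x)-g_\xi(y)|\le(C/q)\d(x,y)\le C[\d(x,y)/q]^\epsilon$ (valid since $\d(x,y)/q\le 1$ and $\epsilon\le 1$) closes the case.

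The main obstacle I anticipate is bookkeeping constants so that the final bound depends only on $m,\M,\epsilon,\rho$ and not on $\#\Xi$ or on $q$. Each piece in sight---the zeros lemma constant, the Sobolev bump bound, and the mesh-ratio conversion between $R$, $h$, and $q$---has the right functional dependence, but one needs to verify uniformity of the Sobolev and Lipschitz seminorms of $g_\xi$ as $\xi$ ranges over $\M$. This reduces via the exponential map and the metric equivalence~(\ref{SobEquiv}) to a single Euclidean statement about a fixed bump, which is routine, and the remaining geometric control follows from the Manifold Properties.
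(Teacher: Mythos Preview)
Your argument is correct and complete, but it follows a genuinely different path from the paper's. The paper imports the localized Sobolev decay estimate \cite[Corollary~4.4]{HNW}, namely $\|\chi_\xi\|_{W_2^m(\M\setminus B(\xi,T))}\le C\,q^{d/2-m}e^{-\nu T/h}$, and then applies the Zeros Lemma directly to $\chi_\xi$ on a ball of radius $\sim h$, splitting into three cases according to the size of $\d(\xi,x)$; the possibility that $\xi$ lies in the ball is handled simply by observing that the zero set $\Xi\setminus\{\xi\}$ still has fill distance at most $2h$, so the Zeros Lemma applies with the doubled radius. As a byproduct the paper actually obtains the stronger pointwise bound $|\chi_\xi(x)-\chi_\xi(y)|\le C[\d(x,y)/q]^\epsilon\exp\bigl(-\nu\min(\d(x,\xi),\inrad)/h\bigr)$. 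Your route is more elementary: you use only the global bound $\|\chi_\xi\|_{W_2^m(\M)}\le Cq^{d/2-m}$, which you derive from the minimum-norm property and a scaled bump rather than by invoking the exponential decay machinery of \cite{HNW}; you split on $\d(x,y)$ rather than on $\d(\xi,x)$; and you deal with the case $\xi\in B$ by subtracting the auxiliary interpolant $g_\xi$ instead of thinning the zero set. The trade-off is that you obtain exactly the stated H\"older estimate and nothing more, whereas the paper's approach delivers the additional exponential localization essentially for free.
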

\begin{proof}[Proof of Lemma \ref{Bern_lemma}]
By compactness and continuity of the functions $s$, it suffices to consider centers $\Xi$ 
 and pairs $x,y$ that satisfy $\d(x,y)\le h\le\Gamma h_0$, a quantity appearing in the zeros lemma, Lemma \ref{zeros} 
 ($h_0$ is defined in Lemma \ref{EucZeros} and $\Gamma_1$ is the lower bound in (\ref{isometry})).

By \cite[Corollary 4.4]{HNW}
there is a constant $\nu>0$ such that, then for $0\le T<\inj$ we have the estimate 
\begin{equation*}
\|\chi_{\xi}\|_{W_2^m\bigl(\M \setminus B(\xi,T)\bigr)} 
\le  C_{m,\M} \, e^{-\nu(\frac{T}{h})} \, q^{d/2-m}
\end{equation*}
where $q$ is the minimal separation distance between points of $\Xi$.
We proceed as in the proof of \cite[Proposition 4.5]{HNW} by  considering three cases.

%
{\bf Case 1.} For  $\d(\xi,x)\le \rad$, we can apply Lemma \ref{zeros} with $r=2\rad$
(i.e., on the ball $B(x,2\rad)$, where the zeros of $\chi_{\xi}$ have mesh norm bounded by $2h$)
to achieve
$$
\left|\frac{\chi_{\xi}(x)-\chi_{\xi}(y)}{[\d(x,y)]^{\epsilon}}\right| \le C
\left(2\rad\right)^{m-\epsilon-d/2}
\|\chi_{\xi}\|_{m, B(x,2\rad)} 
\le
C
\left(\frac{h}{q}\right)^{m -\epsilon-d/2}q^{-\epsilon}.$$
 Thus, for $x \in B(\xi,\rad)$ we have
$ |\chi_{\xi}(x) - \chi_{\xi}(y)| \le C [\d(x,y)]^{\epsilon}q^{-\epsilon} \exp(-\nu \frac{\d(\xi,x)}{h})$.

%
{\bf Case 2.} A similar argument holds for $\rad<T=\d(\xi,x)\le \inj$, where we obtain
 \begin{align*}
  |\chi_{\xi}(x) - \chi_{\xi}(y)| 
&\le C h^{m -\epsilon-d/2} [\d(x,y)]^{\epsilon} \|\chi_{\xi}\|_{W_2^m\bigl(\M\setminus B(\xi, T - \rad)\bigr)}\\
&\le C[\d(x,y)]^{\epsilon}  q^{-\epsilon}\, \exp\left[-\nu\left(\frac{\d(x,\xi)}{h}\right)\right].
\end{align*}
In the second inequality we apply  \cite[Corollary 4.4]{HNW} after applying Lemma \ref{zeros} and making the observation
that $B(x,\rad) \subset \M\setminus B(\xi,T-\rad)$.

%
{\bf Case 3.} For $T>\inj$, we make use of the fact that 
$\M\setminus B(\xi,\inj)\supset\M \setminus B(\xi,T)$, so
we simply use the estimate for $T=\inj$ to obtain
$$|\chi_{\xi}(x) - \chi_{\xi}(y)| \le C  [\d(x,y)]^{\epsilon}q^{-\epsilon} \exp\left[-\nu\left(\frac{\inj}{h}\right)\right]
.$$
This completes the proof of the lemma.
\end{proof}
\bibliographystyle{siam}
\bibliography{HNSW}
\end{document}